\documentclass[12pt]{amsart}
\usepackage{fullpage}
\usepackage{graphicx}
\usepackage{todonotes}
\usepackage{tkz-euclide}


\usepackage{tikz}
\usetikzlibrary{decorations.markings}
\usepackage{tkz-euclide}
\usetkzobj{all}


\newtheorem{theorem}{Theorem}[section]

\newtheorem{lemma}[theorem]{Lemma}

\newtheorem{remark}[theorem]{Remark}
\newtheorem*{thmnonnum}{Main Theorem}

\numberwithin{equation}{section}\setcounter{secnumdepth}{3}


\newcommand\CC {{\mathbb C}}
\newcommand\DD {{\mathbb D}}

\newcommand\HH {{\mathbb H}}

\newcommand\NN {{\mathbb N}}

\newcommand\QQ {{\mathbb Q}}
\newcommand\RR {{\mathbb R}}

\newcommand\ZZ {{\mathbb Z}}


\newcommand\sltwor{{\rm SL(2,\RR)}}
\newcommand\sltwoz{{\rm SL(2,\ZZ)}}
\newcommand\sltwoc{{\rm SL(2,\CC)}}

\newcommand\glgroup{{\rm GL}}
\newcommand\sugroup{{\rm SU}}


\newcommand\diameter{{\rm Diam }}

\newcommand\id{{\rm Id}}
\newcommand\im{{\rm Im }}

\newcommand\re{{\rm Re }}


\newcommand\cA{{\mathcal{A}  }}
\newcommand\cB{{\mathcal{B}  }}

\newcommand\cP{{\mathcal{P}  }}

\newcommand\cS{{\mathcal{S}  }}

\newcommand\cW{{\mathcal{W}  }}


\begin{document}



\title{On good approximations and Bowen-Series expansion}

\author{Luca Marchese}

\address{Dipartimento di Matematica, Universit\`a di Bologna, Piazza di Porta San Donato 5, 40126, Bologna, Italia}

\email{luca.marchese4@unibo.it}


\begin{abstract}
We consider the continued fraction expansion of real numbers under the action of a non-uniform lattice in $\text{PSL}(2,\RR)$ and prove metric relations between the convergents and a natural geometric notion of good approximations.
\end{abstract}

\maketitle


\section{Introduction}

Let 
$\HH:=\{z\in\CC\,;\,\im(z)>0\}$ be the \emph{upper half plane} and for $p/q\in\QQ$ let 
$H_{p/q}\subset\HH$ be the circle of diameter $1/q^2$ tangent at $p/q$. Set 
$
H_\infty=\{z\in\HH:\im(z)>1\}
$ 
and consider the family 
$
\{H_{p/q}:p/q\in\QQ\cup\{\infty\}\}
$ 
of \emph{Ford circles}, which are the orbit of $H_\infty$ under the projective action of the \emph{modular group} $\sltwoz$, that is the group of $2\times 2$ matrices with coefficients $a,b,c,d$ in $\ZZ$ (notation refers to Equation~\eqref{EquationCoefficientsSL(2,C)} below). Any two circles are either disjoint or tangent, and Figure~\ref{FigureFordCircles} shows that for any irrational $\alpha$ there exist infinitely many 
$p/q\in\QQ$ with $\alpha\in\Pi(H_{p/q})$, that is $|\alpha-p/q|<(1/2)q^{-2}$, where $\Pi(x+iy):=x$.
\begin{figure}
\begin{center}
\begin{tikzpicture}[scale=2.4]

\tikzset
{->-/.style={decoration={markings,mark=at position .2 with {\arrow{>}}},postaction={decorate}}}

\begin{scope}

\clip(-0.9,-0.5) rectangle (1.9,1.5);

\draw (-1,0) -- (2,0);

\filldraw[fill=black!5!white] (0,1/2) circle (1/2);
\draw (0,-1/30) -- (0,1/30); \node at (0,-5/30) {$\frac 01$};
\filldraw[fill=black!5!white] (1,1/2) circle (1/2);
\draw (1,-1/30) -- (1,1/30); \node at (1,-5/30) {$\frac 11$};
\filldraw[fill=black!5!white] (-1,1/2) circle (1/2);
\filldraw[fill=black!5!white] (2,1/2) circle (1/2);

\filldraw[fill=black!5!white] (1/2,1/8) circle (1/8);
\draw (1/2,-1/30) -- (1/2,1/30); \node at (1/2,-5/30) {$\frac 12$};
\filldraw[fill=black!5!white] (1/3,1/18) circle (1/18);
\draw (1/3,-1/30) -- (1/3,1/30); \node at (1/3,-5/30) {$\frac 13$};
\filldraw[fill=black!5!white] (1/4,1/32) circle (1/32);
\filldraw[fill=black!5!white] (2/3,1/18) circle (1/18);
\draw (2/3,-1/30) -- (2/3,1/30); \node at (2/3,-5/30) {$\frac 23$};
\filldraw[fill=black!5!white] (3/4,1/32) circle (1/32);

\filldraw[fill=black!5!white] (-1/2,1/8) circle (1/8);
\filldraw[fill=black!5!white] (-1/3,1/18) circle (1/18);
\filldraw[fill=black!5!white] (-2/3,1/18) circle (1/18);
\filldraw[fill=black!5!white] (-3/4,1/36) circle (1/36);
\filldraw[fill=black!5!white] (-1/4,1/36) circle (1/36);

\filldraw[fill=black!5!white] (2-1/2,1/8) circle (1/8);
\filldraw[fill=black!5!white] (2-1/3,1/18) circle (1/18);
\filldraw[fill=black!5!white] (2-2/3,1/18) circle (1/18);
\filldraw[fill=black!5!white] (2-3/4,1/36) circle (1/36);
\filldraw[fill=black!5!white] (2-1/4,1/36) circle (1/36);

\filldraw[fill=black!5!white] 
(-1,1) -- (2,1) -- (2,3) -- (-1,3) -- (-1,1);

\node at (0.5,1.3) {$H_\infty$};


\end{scope}

\end{tikzpicture}
\begin{tikzpicture}[scale=2.4]

\begin{scope}

\clip(-0.6,-0.5) rectangle (0.4,1.5);

\draw (-1.5,0) -- (1,0);

\filldraw[fill=black!5!white] (0,1/3) circle (1/3);
\node at (0.05,0.5) {$H_{p/q}$};
\draw (0,-1/30) -- (0,1/30); \node at (0,-5/30) {$\frac pq$};

\draw[-,dashed] (-0.2,3) -- (-0.2,0);

\node at (-0.2,-2/30) {$\alpha$};

\draw[<->,very thin] (-0.4,0) -- (-0.4,2/3);
\node at (-0.5,1/3) {$\frac{1}{q^2}$};

\end{scope}

\end{tikzpicture}
\end{center}
\caption{Balls $G(H_k)$, $k\in\ZZ$, tangent to $H_{p/q}=G(H_\infty)$, where $p/q=G\cdot\infty$.}
\label{FigureFordCircles}
\end{figure}
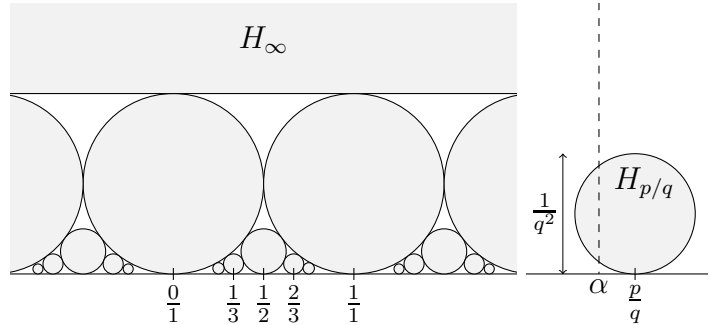
This defines the sequence of \emph{geometric good approximations} of $\alpha$ as the sequence of  
$p_n/q_n$ in $\QQ$ with $\alpha\in\Pi(B_{p_n/q_n})$. The same sequence arises from the continued fraction expansion 
$
\alpha=a_0+[a_1,a_2,\dots]
$ 
of $\alpha$, indeed the \emph{convergents} 
$
p_n/q_n:=a_0+[a_1,\dots,a_n]
$ 
satisfy:
\begin{equation}
\label{Equation(1/2)ApproximationsAreConvergents}
|\alpha-p/q|<(1/2)q^{-2}
\Rightarrow
p/q=p_n/q_n 
\text{ for some }
n\geq1.
\end{equation}
The first $n+1$ \emph{partial quotients} $a_1,\dots,a_{n+1}$ approximate $\alpha$ with error given by
\begin{equation}
\label{EquationBundedErrorClassicalContinuedFraction}
\frac{1}{2+a_{n+1}}\leq q_n^2\cdot|\alpha-p_n/q_n|\leq \frac{1}{a_{n+1}}
\text{ for any }
n\in\NN.
\end{equation}

\emph{Rosen continued fractions} where introduced in~\cite{Rosen}, in relation to diophantine approximation for \emph{Hecke groups}, proving in particular an extension of Equation~\eqref{EquationBundedErrorClassicalContinuedFraction}, which was later improved by \cite{Nakada}. Equation~\eqref{Equation(1/2)ApproximationsAreConvergents} was extended to Rosen continued fraction in~\cite{Lehner}, where the sharp constant replacing $1/2$ was obtained in~\cite{RosenSchmidt}. In this note we consider diophantine approximation for a general non-uniform lattice Fuchsian group, in relation to the so-called \emph{Bowen-Series expansion} of real numbers (\cite{BowenSeries}). We prove extensions of Equations~\eqref{Equation(1/2)ApproximationsAreConvergents} and~\eqref{EquationBundedErrorClassicalContinuedFraction}. Our estimates (Theorem~\ref{TheoremGoodApproximationsAndBoundaryExpansion} in \S~\ref{SectionGoodApproximations}) are used in~\cite{MarcheseDimension} to approximate the dimension of sets of \emph{badly approximable points} by the dimension of dynamically defined regular Cantor sets. The study of the high part of \emph{Markov and Lagrange spectra} is also a natural application, in the spirit of \cite{SheingornSchmidt}, \cite{ArtigianiMarcheseUlcigrai(Veech)} and \cite{ArtigianiMarcheseUlcigrai(Fuchsian)}. In general, Equations~\eqref{Equation(1/2)ApproximationsAreConvergents} and~\eqref{EquationBundedErrorClassicalContinuedFraction} translate diophantine properties into ergodic properties of the Bowen-Series expansion. 

\smallskip

Let $\sltwoc$ be the group of matrices 
\begin{equation}
\label{EquationCoefficientsSL(2,C)}
G=
\begin{pmatrix}
a & b \\
c & d  
\end{pmatrix}
\end{equation}
with $a,b,c,d\in\CC$ and $ad-bc=1$, where any such $G$ acts on points $z\in\CC\cup\{\infty\}$ by 
\begin{equation}
\label{EquationActionSL(2,C)}
G\cdot z:=\frac{az+b}{cz+d}.
\end{equation}
Denote $a=a(G)$, $b=b(G)$, $c=c(G)$ and $d=d(G)$ the coefficients of $G$ as in  Equation~\eqref{EquationCoefficientsSL(2,C)}. The group $\sltwor$ of $G$ with coefficients 
$a,b,c,d$ in $\RR$ acts by isometries on $\HH$ via Equation~\eqref{EquationActionSL(2,C)}, and inherits  a topology from the identification with the set of $(a,b,c,d)\in\RR^4$ with $ad-bc=1$. 
A \emph{Fuchsian group} is a discrete subgroup $\Gamma<\sltwor$. Referring to~\cite{KatokFuchsian}, 
we say that $\Gamma$ is a \emph{lattice} if it has a \emph{Dirichlet region} 
$\Omega\subset\HH$ with finite hyperbolic area. If $\Omega$ is not compact, then the lattice $\Gamma$ is said \emph{non-uniform}. In this case the intersection 
$
\overline{\Omega}\cap\partial\HH
$ 
is a finite non-empty set, whose elements are called the vertices \emph{at infinity} of $\Omega$. A point $z\in\RR\cup\{\infty\}$ is a parabolic fixed point for $\Gamma$ if there exists $P\in\Gamma$ parabolic with $P(z)=z$. Let $\cP_\Gamma$ be the set of parabolic fixed points of $\Gamma$, which is equal to the orbit under $\Gamma$ of the vertices at infinity of 
$\Omega$. The set $\cP_\Gamma$ is dense in $\RR$. Two points $z_1$ and $z_2$ in $\cP_\Gamma$ are \emph{equivalent} if $z_2=G(z_1)$ for some $G\in\Gamma$. Any non-uniform lattice $\Gamma$ has a finite number $p\geq1$ of equivalence classes $[z_1],\dots,[z_p]$ of parabolic fixed points, called the  \emph{cusps} of $\Gamma$.

\smallskip

Let $\Gamma$ be a non-uniform lattice with $p\geq1$ cusps. Fix a list  
$\cS=(A_1,\dots,A_p)$ of elements $A_k\in\sltwor$ such that the points
\begin{equation}
\label{EquationRepresentativesOfCusps}
z_k=A_k\cdot\infty
\quad
\textrm{ for }
\quad
k=1,\dots,p
\end{equation}
form a complete set 
$
\{z_1,\dots,z_p\}\subset\cP_\Gamma
$ 
of inequivalent parabolic fixed points. Any element of $\cP_\Gamma$ has the form $G\cdot z_k$ for some $G\in\Gamma$ and $k=1,\dots,p$. We have horoballs 
\footnote{A natural choice for $z_1,\dots,z_p$ is a maximal set of non-equivalent vertices of a fundamental domain which are not interior points of $\HH$. We can have $z_k=\infty$ and $B_k=H_\infty$, that is $A_k=\id$.}
$$
B_k:=A_k\big(\{z\in\HH:\im(z)>1\}\big)
\quad\text{ with }\quad
k=1,\dots,p,
$$
each $B_k$ being tangent to $\RR\cup\{\infty\}$ at $z_k$. Thus $G(B_k)$ is a ball tangent to the real line at $G\cdot z_k$ for any $G\in\Gamma$ with $G\cdot z_k\not=\infty$. These balls generalize Ford circles and we measure how their diameter shrinks to zero as $G$ varies in $\Gamma$ with the \emph{denominator}
$$
D(G\cdot z_k):=
\left\{
\begin{array}{ccc}
1/\sqrt{\diameter\big(G(B_k)\big)} & \text{if} & G\cdot z_k\not=\infty
\\
0  & \text{if} & G\cdot z_k=\infty.
\end{array}
\right.
$$ 
Recall that for any $T>0$ and any $G\in\sltwor$ with $c(G)\not=0$ we have 
\begin{equation}
\label{EquationDiameterHoroball}
\diameter\bigg(G\big(\big\{z\in\HH:\im(z)>T\}\big)\bigg)=\frac{1}{Tc^2(G)},
\end{equation}
where we refer to the notation of Equation~\eqref{EquationCoefficientsSL(2,C)}. Hence
\begin{equation}
\label{EquationDefinitionDenominator}
D(G\cdot z_k)=|c(GA_k)|
\quad\text{ for any }\quad
G\cdot z_k\in\cP_\Gamma.
\end{equation}
In~\cite{Patterson}, Patterson proves that there exists a constant $M=M(\Gamma,\cS)>0$ such that for any $Q>0$ big enough and any $\alpha\in\RR$ there exists $G\in\Gamma$ and $k\in\{1,\dots,p\}$ with 
$$
|\alpha-G\cdot z_k|
\leq
\frac{M}{D(G\cdot z_k)Q}
\quad
\textrm{ and }
\quad
0<D(G\cdot z_k)\leq Q.
$$ 
For $\Gamma=\sltwoz$, $\cS=\{\id\}$ and $M=1$ Patterson's Theorem gives the Classical Dirichlet Theorem. In general, for any $\alpha\in\RR$ we obtain infinitely many 
$G\cdot z_k\in\cP_\Gamma$ with 
\begin{equation}
\label{EquationAsymptoticDirichletPatterson}
|\alpha-G\cdot z_k|\leq \frac{M}{D^2(G\cdot z_k)}.
\end{equation}

The \emph{Bowen-Series expansion} (\cite{BowenSeries}) provides a coding $\alpha=[W_1,W_2,\dots]$ of a  real number $\alpha$, where for $r\geq1$ we call \emph{cuspidal words} the symbols $W_r$, which belong to a countable alphabet $\cW$ (definitions are in \S~\ref{SectionBoundaryExpansion} and \S~\ref{SectionGoodApproximations}). 
Cuspidal words $W\in\cW$, that where introduced in~\cite{ArtigianiMarcheseUlcigrai(Veech)} and \cite{ArtigianiMarcheseUlcigrai(Fuchsian)}, label a subset of elements $\{G_W:W\in\cW\}$ of $\Gamma$, which generalize the role played in the theory of classical continued fractions by the matrices 
$$
\begin{pmatrix}
1 & a_{2k+1} \\
0 & 1
\end{pmatrix}
\quad\text{ and }\quad
\begin{pmatrix}
1 & 0 \\
a_{2k} & 1
\end{pmatrix}
\quad\text{ with }\quad
a_{2k},a_{2k+1}\in\NN^\ast
\text{ for any }
k\in\NN.
$$
The coding is a continuous bijection $\Sigma\to\RR$, where $\Sigma\subset\cW^\NN$ is a subshift with \emph{aperiodic transition matrix} (see \cite{MarcheseDimension}). 
For $r\geq1$ the first $r$ symbols in the expansion of $\alpha=[W_1,W_2,\dots]$ define  
$
\zeta_r=\zeta_r(W_1,\dots,W_r)\in\cP_\Gamma
$, 
see Equation~\eqref{EquationConvergentBowenSeries}. This extends the classical notion of convergents $p_n/q_n$ of $\alpha$. 
The main result of this note is Theorem~\ref{TheoremGoodApproximationsAndBoundaryExpansion} 
in \S~\ref{SectionGoodApproximations}. We give the following preliminary statement (see also Remark~\ref{RemarkCovarianceIndependence}).

\begin{thmnonnum}[Theorem~\ref{TheoremGoodApproximationsAndBoundaryExpansion}]
Fix $\alpha=[W_1,W_2,\dots]$ which is not an element of $\cP_\Gamma$. The convergents 
$
\zeta_r=\zeta_r(W_1,\dots,W_r)
$ 
approximate $\alpha$ with error given by an analogous of 
Equation~\eqref{EquationBundedErrorClassicalContinuedFraction}. Moreover there exists a constant 
$\epsilon_0>0$ such that any $G\cdot z_k\in\cP_\Gamma$ satisfying 
Equation~\eqref{EquationAsymptoticDirichletPatterson} with $M=\epsilon_0$ belongs to the sequence 
$(\zeta_r)_{r\geq1}$.
\end{thmnonnum}

\subsection*{Acknowledgements}

The author is grateful to M. Artigiani and C. Ulcigrai.

\section{The Bowen-Series expansion}
\label{SectionBoundaryExpansion}

We follow \S~3 in~\cite{MarcheseDimension}, which is itself based on 
\S~2.4 in \cite{ArtigianiMarcheseUlcigrai(Veech)} and \S~2 in \cite{ArtigianiMarcheseUlcigrai(Fuchsian)}. The original construction is in \cite{BowenSeries}. 
Consider the unit disc $\DD:=\{z\in\CC:|z|<1\}$ and the map 
\begin{equation}
\label{EquationConjugationUpperHalfPlaneDisc}
\varphi:\HH\to\DD
\quad\text{;}\quad
\varphi(z):=\frac{z-i}{z+i}.
\end{equation}
The conjugated of $\sltwor$ under $\varphi$ is the group $\sugroup(1,1)$ of $F\in\glgroup(2,\CC)$ with 
\begin{equation}
\label{EquationCoefficientsSU(1,1)}
F=
\begin{pmatrix}
\alpha & \overline{\beta} \\
\beta & \overline{\alpha}  
\end{pmatrix}
\quad
\textrm{ with }
\quad
|\alpha|^2-|\beta|^2=1.
\end{equation}
Denote $\alpha=\alpha(F)$ and $\beta=\beta(F)$ the coefficients of $F$ as in Equation~\eqref{EquationCoefficientsSU(1,1)}.

\subsection{Isometric circles}
\label{SectionIsometricCircles}

Consider $F\in\sugroup(1,1)$ and $\alpha=\alpha(F)$, $\beta=\beta(F)$ as in 
Equation~\eqref{EquationCoefficientsSU(1,1)}. Assume $\beta\not=0$ and let 
$
\omega_F:=-\overline{\alpha}/\beta
$ 
be the pole of $F$. The \emph{isometric circle} $I_F$ of $F$ is the euclidean circle centered at 
$\omega_F$ with radius $\rho(F):=|\beta|^{-1}$, that is 
$$
I_F:=\{\xi\in\CC:|\xi-\omega_F|\}.
$$
According to Theorem~3.3.2 in \cite{KatokFuchsian} we have
$$
F(I_F)=I_{F^{-1}}
\quad
\textrm{ where }
\quad
\rho(F)=\rho(F^{-1})
\quad
\textrm{ and }
\quad
|\omega_{F^{-1}}|=|\omega_F|.
$$
Moreover $I_F\cap\DD$ is a geodesic of $\DD$ for any $F\in\sugroup(1,1)$, by Theorem~3.3.3 in \cite{KatokFuchsian}. Denote $U_F$ the disc in $\CC$ with 
$\partial U_F=I_F$, that is the interior of $I_F$. 

\subsection{Labelled ideal polygon}
\label{SectionLabelledIdealPolygon}

Let $\Gamma\subset\sugroup(1,1)$ be a non-uniform lattice. 
According to~\cite{Tukia}, there exist a free subgroup $\Gamma_0<\Gamma$ with finite index 
$[\Gamma_0:\Gamma]<+\infty$. See also \S~2.2 of~\cite{MarcheseDimension}. In particular $\beta(F)\not=0$ for any $F\in\Gamma_0$, referring to 
Equation~\eqref{EquationCoefficientsSU(1,1)}, so that the isometric circle $I_F$ and the disc $U_F$ introduced in \S~\ref{SectionIsometricCircles} are defined. The origin $0\in\DD$ is not a fixed point of any $F\in\Gamma_0$ and Theorem 3.3.5 in \cite{KatokFuchsian} implies that the set
\begin{equation}
\label{EquationDefinitionFundamentalPolygon}
\Omega_0:=\overline{\DD\setminus\bigcup_{F\in\Gamma_0}U_F}
\end{equation}
is a Dirichlet region for $\Gamma_0$. As it is explained in \S~2.1 and \S~2.4 of~\cite{MarcheseDimension}, the polygon $\Omega_0$ has finitely many sides, which we denote by the letter $s$. Moreover there is an even number $2d$ of sides. 
Thus $\Omega_0$ has $2d$ vertices, which we denote by the letter 
$\xi$. All vertices of $\Omega_0$ belong to $\partial\DD$, because $\Gamma_0$ is free. Any side $s$ is a complete geodesic in $\DD$ and for any such $s$ there exists an unique $F\in\Gamma$ such that $F(s)$ is an other side of $\Omega_0$ with $F(s)\not=s$. The sides $s$ and $F(s)$ are thus \emph{paired}. 
See Figure~\ref{FigureLabelledIdealPolygon}. The set of pairings generates $\Gamma_0$, according to Theorem~3.5.4 in \cite{KatokFuchsian}. In order to establish a convenient labelling, consider two finite alphabets $\cA_0$ and $\widehat{\cA_0}$, both with $d$ elements and a map 
$$
\iota:\cA_0\cup\widehat{\cA_0}\to\cA_0\cup\widehat{\cA_0}
\quad
\textrm{ with }
\quad
\iota^2=\id
\quad
\textrm{ and }
\quad
\iota(\cA_0)=\widehat{\cA_0},
$$
that is an involution of $\cA_0\cup\widehat{\cA_0}$ which exchanges $\cA_0$ with $\widehat{\cA_0}$. For convenience of notation, set $\cA:=\cA_0\cup\widehat{\cA_0}$ and for any $a\in\cA$, denote 
$\widehat{a}:=\iota(a)$.

\begin{figure}
\begin{center}
\begin{tikzpicture}[scale=2.8]

\begin{scope}

\tkzDefPoint(0,0){O}
\tkzDefPoint(1,0){A}
\tkzDrawCircle[thick,fill=black!15](O,A)
\tkzClipCircle(O,A)



\tkzDefPoint({cos(0)},{sin(0)}){z0}
\tkzDefPoint({cos(0.2*pi)},{sin(0.2*pi)}){z1}
\tkzDefPoint({cos(0.5*pi)},{sin(0.5*pi)}){z2}
\tkzDefPoint({cos(0.7*pi)},{sin(0.7*pi)}){z3}
\tkzDefPoint({cos(1.0*pi)},{sin(1.0*pi)}){z4}
\tkzDefPoint({cos(1.3*pi)},{sin(1.3*pi)}){z5}
\tkzDefPoint({cos(1.5*pi)},{sin(1.5*pi)}){z6}
\tkzDefPoint({cos(1.7*pi)},{sin(1.7*pi)}){z7}

\tkzDrawCircle[orthogonal through=z0 and z1,fill=white](O,A)
\node[] at ({0.8*cos(17)},{0.8*sin(17)}) {$s_a$};
\tkzDrawCircle[orthogonal through=z1 and z2,fill=white](O,A)
\node[] at ({0.7*cos(70)},{0.7*sin(70)}) {$s_b$};
\tkzDrawCircle[orthogonal through=z2 and z3,fill=white](O,A)
\node[] at ({0.85*cos(110)},{0.85*sin(110)}) {$s_{\widehat{a}}$};
\tkzDrawCircle[orthogonal through=z3 and z4,fill=white](O,A)
\node[] at ({0.7*cos(150)},{0.7*sin(150)}) {$s_{\widehat{b}}$};
\tkzDrawCircle[orthogonal through=z4 and z5,fill=white](O,A)
\node[] at ({0.75*cos(210)},{0.75*sin(210)}) {$s_c$};
\tkzDrawCircle[orthogonal through=z5 and z6,fill=white](O,A)
\node[] at ({0.85*cos(255)},{0.85*sin(255)}) {$s_d$};
\tkzDrawCircle[orthogonal through=z6 and z7,fill=white](O,A)
\node[] at ({0.85*cos(285)},{0.85*sin(285)}) {$s_{\widehat{d}}$};
\tkzDrawCircle[orthogonal through=z7 and z0,fill=white](O,A)
\node[] at ({0.75*cos(330)},{0.75*sin(330)}) {$s_{\widehat{c}}$};

\draw[->,thick] ({0.7*cos(255)},{0.7*sin(255)}) .. controls 
({0.6*cos(265)},{0.6*sin(265)}) and ({0.6*cos(275)},{0.6*sin(275)}) 
.. ({0.7*cos(285)},{0.7*sin(285)}) {};

\node[] at ({0.65*cos(270)},{0.65*sin(270)}) [above] {$F_{\widehat{d}}$};

\draw[->,thick] ({0.5*cos(145)},{0.5*sin(145)}) .. controls 
({0.3*cos(120)},{0.3*sin(120)}) and ({0.3*cos(90)},{0.3*sin(90)}) 
.. ({0.5*cos(65)},{0.5*sin(65)}) {};

\node[] at ({0.55*cos(100)},{0.55*sin(100)}) [right,below] {$F_{b}$};

\end{scope}

\begin{scope}

\tkzDefPoint(0,0){O}
\tkzDefPoint(1,0){A}
\tkzDrawCircle(O,A)

\node[] at ({cos(0)},{sin(0)}) [right] {$\xi^L_{\widehat{c}}=\xi^R_a$};
\node[] at ({cos(40)},{sin(40)}) [right] {$\xi^L_a=\xi^R_b$};
\node[] at ({cos(90)},{sin(90)}) [above] {$\xi^L_b$};
\node[] at ({cos(130)},{sin(130)}) [above] {$\xi^R_{\widehat{b}}$};
\node[] at ({cos(180)},{sin(180)}) [left] {$\xi^L_{\widehat{b}}$};
\node[] at ({cos(310)},{sin(310)}) [right,below] {$\xi^R_{\widehat{c}}$};

\node[] at ({1.15*cos(17)},{1.15*sin(17)}) {$[a]_\DD$};
\node[] at ({1.15*cos(60)},{1.15*sin(60)}) {$[b]_\DD$};
\node[] at ({1.15*cos(150)},{1.15*sin(150)}) {$[\widehat{b}]_\DD$};

\end{scope}
\end{tikzpicture}

\end{center}
\caption{Ideal polygon labelled by 
$
\cA=\{a,b,c,d,\widehat{a},\widehat{b},\widehat{c},\widehat{d}\}
$.
}
\label{FigureLabelledIdealPolygon}
\end{figure}

\begin{itemize}
\item
Label the sides of $\Omega_0$ by the letters in $\cA$, so that for any 
$
a\in\cA
$ 
the sides $s_a$ and $s_{\widehat{a}}$ are those which are paired by the action of $\Gamma_0$.  
\item
For any pair of sides $s_a$ and $s_{\widehat{a}}$ as above, let $F_a$ be the unique element of 
$\Gamma_0$ such that
\begin{equation}
\label{EquationPairingPolygonSides}
F_a(s_{\widehat{a}})=s_a.
\end{equation}
\item
For any $a\in\cA$ we have $F_{\widehat{a}}=F_a^{-1}$, and the latter form a set of generators for $\Gamma_0$. 
\end{itemize}

In the following we denote  
$
\Omega_\DD:=\Omega_0\subset\DD
$ 
the labelled ideal polygon defined above and 
$
\Omega_\HH:=\varphi^{-1}(\Omega_\DD)\subset\HH
$ 
its preimage under the map in 
Equation~\eqref{EquationConjugationUpperHalfPlaneDisc}.

\subsection{The boundary map}
\label{SectionTheBoundaryMap}

Parametrize arcs $J\subset\partial\DD$ by $t\mapsto e^{-it}$ with $t\in(x,y)$. Set $\inf J:=e^{-ix}$ and $\sup J:=e^{-iy}$. We say that $J$ is \emph{right open} if $\inf J\in J$ and $\sup J\not\in J$. 
Let $\Gamma_0<\Gamma$ be a finite index free subgroup and $\Omega_\DD$ be an ideal polygon for 
$\Gamma_0$ labelled by $\cA$, as in \S~\ref{SectionLabelledIdealPolygon}. 

\smallskip

For $a\in\cA$ let $F_a$ be the map in Equation~\eqref{EquationPairingPolygonSides}. 
Let $I_{F_a}$ be the isometric circle of $F_a$ and $U_{F_a}$ be its interior, as in \S~\ref{SectionIsometricCircles}. 
Recall that  
$
s_{\widehat{a}}=I_{F_a}\cap\DD
$ 
and 
$
s_{a}=I_{F_{\widehat{a}}}\cap\DD
$. 
Let $[a]_\DD$ be the right open arc of $\partial\DD$ cut by the side $s_a$, that is
$$
[a]_\DD:=U_{F_{\widehat{a}}}\cap\partial\DD.
$$
Set $\xi_a^L:=\inf [a]_\DD$ and $\xi_a^R:=\sup [a]_\DD$. 
Figure~\ref{FigureLabelledIdealPolygon} shows examples of such notation. In order to take account of the cyclic order in $\partial\DD$ of the arcs $[a]_\DD$, fix $a_0\in\cA$ and define a map 
$o:\cA\to\ZZ/2d\ZZ$ setting $o(a_0):=0$ and
\begin{equation}
\label{EquationCyclicOrderLetters}
o(b)=o(a)+1 \mod2d
\quad
\textrm{ for }
\quad
a,b\in\cA
\quad
\textrm{ with }
\quad
\xi^R_a=\xi^L_b.
\end{equation}
We have $F_a(I_{F_a})=I_{F_{\widehat{a}}}$ for any $a\in\cA$, 
thus $F_a$ sends the complement of $[\widehat{a}]_\DD$ to $[a]_\DD$, that is 
\begin{equation}
\label{EquationActionGeneratorsBoundary}
F_a\big(\partial\DD\setminus [\widehat{a}]_\DD\big)=[a]_\DD.
\end{equation}
The \emph{Bowen-Series map} is the map $\cB\cS:\partial\DD\to\partial\DD$ defined by
\begin{equation}
\label{EquationBowenSeriesMap}
\cB\cS(\xi):=F_a^{-1}(\xi)
\quad
\textrm{iff}
\quad
\xi\in [a]_\DD. 
\end{equation}
The \emph{boundary expansion} of  a point $\xi\in\partial\DD$ is the sequence $(a_k)_{k\in\NN}$ of letters $a_k\in\cA$ with 
\begin{equation}
\label{EquationDefinitionBoundaryExpansion}
\cB\cS^k(\xi)\in [a_k]_\DD
\quad
\textrm{ for any }
k\in \NN.
\end{equation}
By Equation~\eqref{EquationActionGeneratorsBoundary}, any such sequence satisfies the so-called \emph{no backtracking Condition}:
\begin{equation}
\label{EqNobacktrack}
a_{k+1}\not=\widehat{a_k}
\textrm{ for any }
k\in\NN.
\end{equation}
A finite word $(a_0,\dots,a_n)$ satisfying Condition~\eqref{EqNobacktrack} corresponds to a \emph{factor} of the map $\cB\cS:\partial\DD\to\partial\DD$, that is a finite concatenation  
$
F_{a_n}^{-1}\circ\dots\circ F_{a_0}^{-1}
$ 
arising from iterations of $\cB\cS$. We call \emph{admissible word}, or simply \emph{word}, any finite or infinite word in the letters of $\cA$ satisfying 
Condition~\eqref{EqNobacktrack}. We use the notation
$$
F_{a_0,\dots,a_n}:=F_{a_0}\circ\dots\circ F_{a_n}\in \Gamma_0.
$$
Define the right open arc 
$
[a_0,\dots,a_n]_\DD
$ 
as the set of $\xi\in\partial\DD$ such that $\cB\cS^k(\xi)\in[a_k]_\DD$ for any $k=0,\dots,n$, that is
\begin{equation}
\label{EquationDefinitionCylinderBowenSeries}
[a_0,\dots,a_n]_\DD
:=
F_{a_0,\dots,a_{n-1}}[a_n]_\DD
=
F_{a_0,\dots,a_n}\big(\partial\DD\setminus[\widehat{a_n}]_\DD\big).
\end{equation}
Two such arcs satisfy  
$
[a_0,\dots,a_n]_\DD\subset [b_0,\dots,b_m]_\DD
$ 
if and only if $m\geq n$ and $a_k=b_k$ for any $k=0,\dots,n$. It is easy to see that   
$[a_0,\dots,a_n]_\DD$ shrinks to a point as $n\to\infty$. See Lemma~3.1 in~\cite{MarcheseDimension} for a proof. A sequence $(a_k)_{k\in\NN}$ satisfying Condition~\eqref{EqNobacktrack} corresponds to a point 
$
\xi=[a_0,a_1,\dots]_\DD
$ 
in $\partial\DD$, where we use the notation   
$$
[a_0,a_1,\dots]_\DD:=\bigcap_{n\in\NN}[a_0\dots,a_n]_\DD.
$$
Conversely, if $(a_k)_{k\in\NN}$ is the boundary expansion of $\xi\in\partial\DD$, then 
$\xi=[a_0,a_1,\dots]_\DD$. The Bowen-Series map $\cB\cS$ is the shift on the space of admissible infinite words.

\subsection{Cuspidal words}
\label{SectionCuspidalWords}

Consider the map $o:\cA\to\ZZ/2d\ZZ$ in Equation~\eqref{EquationCyclicOrderLetters}. 
Lemma~\ref{LemmaCombinatorialPropertiesCuspidal} below is an easy consequence of the definitions in 
\S~\ref{SectionTheBoundaryMap}. See Lemma~3.2 in~\cite{MarcheseDimension} for a proof.

\begin{lemma}
\label{LemmaCombinatorialPropertiesCuspidal}
Let $(a_0,\dots,a_n)$ be a word satisfying Condition~\eqref{EqNobacktrack} with $n\geq1$ and $a_0=a_n$. The map $F_{a_0,\dots,a_{n-1}}$ is a parabolic element of $\Gamma_0$ fixing $\xi^R_{a_0}$ if and only if 
\begin{equation}
\label{EquationRightCuspidalWord(-)}
o(a_{k+1})=o(\widehat{a_k})-1
\quad
\textrm{ for any }
\quad
k=0,\dots,n-1.
\end{equation}
The map $F_{a_0,\dots,a_{n-1}}$ is a parabolic element of $\Gamma_0$ fixing $\xi^L_{a_0}$ if and only if 
\begin{equation}
\label{EquationLeftCuspidalWord(+)}
o(a_{k+1})=o(\widehat{a_k})+1
\quad
\textrm{ for any }
\quad
k=0,\dots,n-1.
\end{equation}
\end{lemma}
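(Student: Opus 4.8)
The plan is to prove Lemma~\ref{LemmaCombinatorialPropertiesCuspidal} by a direct inspection of how the generators $F_a$ act on the cyclically ordered system of arcs $[a]_\DD$, reducing everything to bookkeeping with the order function $o$. Since $F_{a_0,\dots,a_{n-1}}$ is a product of generators, its only possible fixed points on $\partial\DD$ are vertices of the polygon, and the statement just asserts that the combinatorial condition on consecutive letters is precisely what forces the orbit of a chosen vertex to return to itself, with parabolicity following automatically (an element of the free group $\Gamma_0$ fixing a single boundary point is either parabolic or the identity, and it cannot be the identity by the no-backtracking condition when $n\geq1$). I would treat the right-fixed-point case in detail and obtain the left case by symmetry (replacing the cyclic order by its reverse, or equivalently conjugating by an orientation-reversing isometry).

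First I would set up the local picture at a vertex. Each side $s_a$ cuts off the right-open arc $[a]_\DD$ with endpoints $\xi^L_a=\inf[a]_\DD$ and $\xi^R_a=\sup[a]_\DD$, and by Equation~\eqref{EquationCyclicOrderLetters} the arcs $[a]_\DD$ are arranged around $\partial\DD$ so that $\xi^R_a=\xi^L_b$ exactly when $o(b)=o(a)+1$; thus the $2d$ vertices are naturally indexed by $\ZZ/2d\ZZ$, and the vertex $\xi^R_a$ carries index $o(a)+1$ say (I would fix the convention at the start). The key local fact is Equation~\eqref{EquationActionGeneratorsBoundary}: $F_a$ maps $\partial\DD\setminus[\widehat a]_\DD$ bijectively onto $[a]_\DD$, hence it sends the two endpoints of the complementary arc $[\widehat a]_\DD$, namely $\xi^L_{\widehat a}$ and $\xi^R_{\widehat a}$, to the two endpoints $\xi^L_a$ and $\xi^R_a$ of $[a]_\DD$, and it does so in an orientation-preserving way. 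Tracking which endpoint goes where, $F_a(\xi^R_{\widehat a})$ is the endpoint of $[a]_\DD$ that is "first" in the cyclic order, i.e.\ $\xi^L_a$; equivalently, in terms of indices, $F_a$ sends the vertex of index $o(\widehat a)$ to the vertex of index $o(a)-1$, and the vertex of index $o(\widehat a)+1$ to the vertex of index $o(a)$ (these two assignments are dual under $a\leftrightarrow\widehat a$). The condition $o(a_{k+1})=o(\widehat{a_k})-1$ in Equation~\eqref{EquationRightCuspidalWord(-)} is exactly the statement that the vertex of index $o(\widehat{a_k})-1$ — which is where $F_{a_k}$ sends the index-$o(\widehat a_k)$ vertex — coincides with the vertex of index $o(a_{k+1})$, so that the composition can propagate.

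Next I would run the induction. Start from the vertex $\xi^R_{a_0}$; applying the generators one at a time from the right, $F_{a_{n-1}}$ first, I claim that under Condition~\eqref{EquationRightCuspidalWord(-)} the partial composition $F_{a_j}\circ\cdots\circ F_{a_{n-1}}$ sends $\xi^R_{a_0}$ to $\xi^R_{a_j}$ (or the appropriately-indexed vertex), the inductive step being precisely the index identity established above together with $a_0=a_n$ to close the loop; at the end $F_{a_0,\dots,a_{n-1}}$ fixes $\xi^R_{a_0}$. For the converse, if $F_{a_0,\dots,a_{n-1}}$ is parabolic fixing $\xi^R_{a_0}$, then reading the orbit of that vertex under the successive generators shows each vertex must be the one forced by the index bookkeeping — any deviation at step $k$ would move the marked point off the only geodesic/vertex that can eventually return, contradicting that the final point is $\xi^R_{a_0}$ — which yields Equation~\eqref{EquationRightCuspidalWord(-)}. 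Finally, parabolicity (as opposed to ellipticity or hyperbolicity) is forced because a non-trivial element of the free Fuchsian group $\Gamma_0$ cannot be elliptic, and it has a rational fixed point on $\partial\DD$ which is a vertex of $\Omega_\DD$; an element fixing such a vertex and no other boundary point is parabolic. The left case is identical after reversing orientation.

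The main obstacle I expect is getting the endpoint/orientation bookkeeping exactly right: one must be careful about which of $\xi^L_a,\xi^R_a$ is the image of which endpoint of $[\widehat a]_\DD$ under $F_a$, and about the sign conventions in Equation~\eqref{EquationCyclicOrderLetters} (right-open arcs, the parametrization $t\mapsto e^{-it}$), since an off-by-one or a reversed orientation would swap the two cases in the lemma. I would pin this down once and for all at the start by computing with one explicit generator, or by invoking Figure~\ref{FigureLabelledIdealPolygon}, and then the rest is a clean induction; everything else — that only vertices can be fixed, that the map is parabolic rather than identity — is standard and follows from $\Gamma_0$ being free and discrete.
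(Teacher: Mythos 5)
The paper itself does not prove Lemma~\ref{LemmaCombinatorialPropertiesCuspidal}; it calls it an easy consequence of the definitions and defers to Lemma~3.2 of \cite{MarcheseDimension}, so there is no internal proof to compare against. Your approach is the intended one, and your forward implication is essentially complete: since $F_a$ is an orientation-preserving circle homeomorphism taking $\partial\DD\setminus[\widehat{a}]_\DD$ onto $[a]_\DD$, it sends $\xi^R_{\widehat{a}}\mapsto\xi^L_a$ and $\xi^L_{\widehat{a}}\mapsto\xi^R_a$; Equation~\eqref{EquationCyclicOrderLetters} turns the hypothesis $o(a_{k+1})=o(\widehat{a_k})-1$ into $\xi^R_{a_{k+1}}=\xi^L_{\widehat{a_k}}$, hence $F_{a_k}(\xi^R_{a_{k+1}})=\xi^R_{a_k}$, and $a_n=a_0$ closes the loop. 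For parabolicity it is cleaner to invoke that the vertices of the ideal polygon are parabolic fixed points of $\Gamma_0$ (so a nontrivial element fixing one can be neither elliptic nor hyperbolic) rather than freeness alone; "an element fixing a single boundary point is parabolic or the identity" is not literally true, since each fixed point of a hyperbolic element is also "a single boundary point".

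The one genuine soft spot is the converse, where "any deviation at step $k$ would move the marked point off the only geodesic/vertex that can eventually return" is a gesture, not an argument. It can be closed as follows: let $j$ be the largest index at which \eqref{EquationRightCuspidalWord(-)} fails, so that $F_{a_{j+1},\dots,a_{n-1}}(\xi^R_{a_0})=\xi^R_{a_{j+1}}$ by your induction. Failure at $j$ together with no backtracking ($a_{j+1}\neq\widehat{a_j}$) forces $\xi^R_{a_{j+1}}$ to lie outside the closed arc $\overline{[\widehat{a_j}]_\DD}$, hence $F_{a_j}(\xi^R_{a_{j+1}})$ lies in the \emph{open} arc $[a_j]_\DD$; and since for every $k$ the open arc $[a_{k+1}]_\DD$ is disjoint from $\overline{[\widehat{a_k}]_\DD}$, each subsequent generator keeps the point in the open arc $[a_k]_\DD$, so $F_{a_0,\dots,a_{n-1}}(\xi^R_{a_0})$ ends in the interior of $[a_0]_\DD$ and cannot equal $\xi^R_{a_0}=\sup[a_0]_\DD$. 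Finally, your index bookkeeping contains exactly the off-by-one you warn about: with your own convention that $\xi^L_a$ has index $o(a)$, the generator $F_a$ sends index $o(\widehat{a})$ to $o(a)+1$, not to $o(a)-1$; this does not damage the argument provided you work with the endpoint identities $F_a(\xi^R_{\widehat{a}})=\xi^L_a$, $F_a(\xi^L_{\widehat{a}})=\xi^R_a$ rather than with the garbled index form.
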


Let $W=(a_0,\dots,a_n)$ be an admissible word. We say that $W$ is a \emph{cuspidal word} if it is the initial factor of an admissible word $(a_0,\dots,a_m)$ with $m\geq n$ such that 
$F_{a_0,\dots,a_m}$ is a parabolic element of $\Gamma_0$ fixing a vertex of $\Omega_\DD$. 

\begin{itemize}
\item
If $n\geq1$ and Equation~\eqref{EquationRightCuspidalWord(-)} is satisfied, we say that $W$ is a \emph{right cuspidal word}. In this case we define its type by $\varepsilon(W):=R$ and we set 
$\xi_W:=\xi^R_{a_0}$.
\item
If $n\geq1$ and Equation~\eqref{EquationLeftCuspidalWord(+)} is satisfied, we say that $W$ is a \emph{left cuspidal word}. In this case we define its type by $\varepsilon(W):=L$ and we set 
$\xi_W:=\xi^L_{a_0}$.
\item
If $n=0$, that is $W=(a_0)$ has just one letter, the type $\varepsilon(W)$ is not defined. We set by convention $\xi_W:=\xi^R_{a_0}$.
\end{itemize}

If $W=(a_0,\dots,a_n)$ is cuspidal with $n\geq1$, Lemma~\ref{LemmaCombinatorialPropertiesCuspidal} implies 
$
\xi^{\varepsilon(W)}_{a_k}=F_{a_k}\cdot\xi^{\varepsilon(W)}_{a_{k+1}}
$ 
for any $k=0,\dots,n-1$ and it follows 
\begin{equation}
\label{EquationCommonEndpointsCuspidalWords}
\xi_W=\partial[a_0]_\DD\cap\partial[a_0,a_1]_\DD\cap\dots\cap\partial[a_0,\dots,a_n]_\DD,
\end{equation}
that is the $n+1$ arcs above share $\xi_W$ as common endpoint (see also \S~2.4 in \cite{ArtigianiMarcheseUlcigrai(Fuchsian)} and \S~4.3 in \cite{ArtigianiMarcheseUlcigrai(Veech)}). 
A sequence $(a_n)_{n \in \NN}$ is said \emph{cuspidal} if any initial factor $(a_0,\dots,a_n)$ with $n\in\NN$ is a cuspidal word, and \emph{eventually cuspidal} if there exists 
$k \in \NN$ such that $(a_{n+k})_{n \in \NN}$ is a cuspidal sequence.

\subsection{The cuspidal acceleration}
\label{SectionCuspidalAcceleration}

If $W=(b_0,\dots,b_m)$ and $W'=(a_0,\dots,a_n)$ are words with 
$a_0\not=\widehat{b_m}$, define the word 
$
W\ast W':=(b_0,\dots,b_m,a_0,\dots,a_n)
$. 
Let $(a_n)_{n\in\NN}$ be a sequence satisfying Condition \eqref{EqNobacktrack} and not eventually cuspidal. 

\begin{description}
\item[Initial step]
Set $n(0):=0$. Let $n(1)\in\NN$ be the maximal integer $n(1)\geq 1$ such that 
$
(a_0,\dots,a_{n(1)-1})
$ 
is cuspidal, then set
$$
W_0:=(a_0,\dots,a_{n(1)-1}).
$$
\item[Recursive step]
Fix $r\geq1$ and assume that the instants $n(0)<\dots<n(r)$ and the cuspidal words $W_0,\dots,W_{r-1}$ are defined. Define $n(r+1)\geq n(r)+1$ as the maximal integer such that 
$
[a_{n(r)},\dots,a_{n(r+1)-1}]
$ 
is cuspidal, then set 
$$
W_r:=(a_{n(r)},\dots,a_{n(r+1)-1}).
$$
\end{description}

The sequence of words $(W_r)_{r\in\NN}$ is called the \emph{cuspidal decomposition} of 
$(a_n)_{n\in\NN}$. We have of course 
$
a_0,a_1,a_2\dots=W_0\ast W_1\ast\dots
$. 
For any $\xi=[a_0,a_1,\dots]_\DD$, if $(W_r)_{r\in\NN}$ is the cuspidal decomposition of 
$(a_n)_{n\in\NN}$, we write
\begin{equation}
\label{EquationCuspidalCuttingSequence(UnitDisc)}
\xi=[a_0,a_1,\dots]_\DD=[W_0,W_1,\dots]_\DD.
\end{equation}

\begin{remark}
\label{RemarkConcatenationCuspidalWords}
If 
$
W_{r-1}:=(a_{n(r-1)},\dots,a_{n(r)-1})
$ 
and 
$
W_{r}:=(a_{n(r)},\dots,a_{n(r+1)-1})
$ 
are two consecutive cuspidal words in the cuspidal decomposition of a sequence $(a_n)_{n\in\NN}$ satisfying Condition \eqref{EqNobacktrack}, then the word 
$
(a_{n(r)-1},a_{n(r)},\dots,a_{n(r+1)-1})
$ 
can be cuspidal.
\end{remark}

\section{The main Theorem}
\label{SectionGoodApproximations}

The tools in \S~\ref{SectionBoundaryExpansion} induce a boundary expansion on $\RR$. In terms of such expansion we state our main Theorem~\ref{TheoremGoodApproximationsAndBoundaryExpansion}. 
Let $\Gamma_0<\Gamma$ be the free subgroup and $\Omega_\DD\subset\DD$ be the ideal polygon in \S~\ref{SectionLabelledIdealPolygon}. 
Recall that $\cP_{\Gamma_0}=\Gamma_0(\Omega_\DD\cap\partial\DD)$ by Theorem~4.2.5 in \cite{KatokFuchsian}. Since $\Gamma_0$ has finite index in $\Gamma$ then the two groups have the same set of parabolic fixed points, that is
\begin{equation}
\label{EquationSameParabolicFixedPoints}
\cP_\Gamma
=
\Gamma_0(\Omega_\DD\cap\partial\DD).
\end{equation}

\subsection{Geometric length of cuspidal words and main statement}

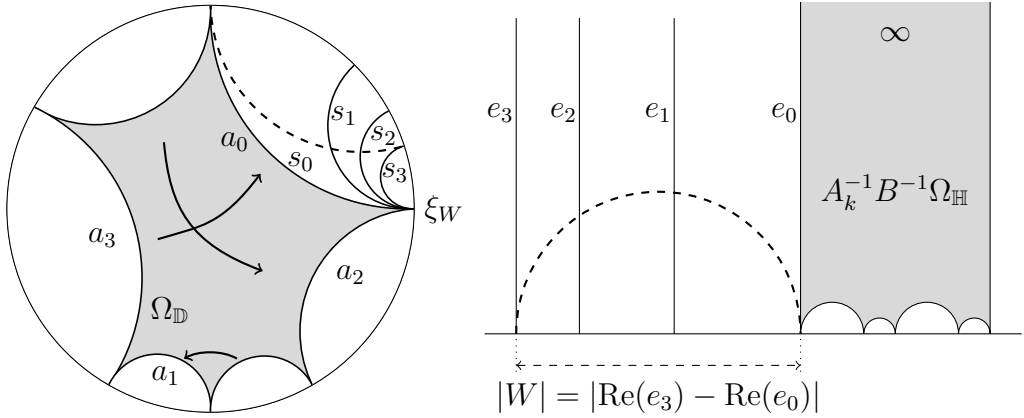
\begin{figure}
\begin{center}
\begin{tikzpicture}[scale=2.7]

\begin{scope}

\tkzDefPoint(0,0){O}
\tkzDefPoint(1,0){A}
\tkzDrawCircle(O,A)

\node[] at (1,0) [right]{$\xi_W$};

\end{scope}

\begin{scope}

\tkzDefPoint(0,0){O}
\tkzDefPoint(1,0){A}
\tkzDrawCircle[thick,fill=black!15](O,A)

\tkzClipCircle(O,A)


\node[] at (-0.2,-0.5) {$\Omega_\DD$};

\tkzDefPoint({cos(0)},{sin(0)}){z0}
\tkzDefPoint({cos(pi/2)},{sin(pi/2)}){z1}
\tkzDefPoint({cos(5*pi/6)},{sin(5*pi/6)}){z2}
\tkzDefPoint({cos(8*pi/6)},{sin(8*pi/6)}){z3}
\tkzDefPoint({cos(3*pi/2)},{sin(3*pi/2)}){z4}
\tkzDefPoint({cos(10*pi/6)},{sin(10*pi/6)}){z5}

\tkzDrawCircle[orthogonal through=z0 and z1,fill=white](O,A)
\node[] at ({0.5*cos(27.5)},{0.5*sin(27.5)}) {$s_0$};
\node[] at ({0.35*cos(70)},{0.35*sin(70)}) {$a_0$};
\tkzDrawCircle[orthogonal through=z1 and z2,fill=white](O,A)
\tkzDrawCircle[orthogonal through=z2 and z3,fill=white](O,A)
\node[] at ({0.55*cos(195)},{0.55*sin(195)}) {$a_3$};
\tkzDrawCircle[orthogonal through=z3 and z4,fill=white](O,A)
\node[] at ({0.85*cos(255)},{0.85*sin(255)}) {$a_1$};
\tkzDrawCircle[orthogonal through=z4 and z5,fill=white](O,A)
\tkzDrawCircle[orthogonal through=z5 and z0,fill=white](O,A)
\node[right] at ({0.65*cos(330)},{0.65*sin(330)}) {$a_2$};

\draw[->,thick] ({0.75*cos(280)},{0.75*sin(280)}) .. controls 
({0.7*cos(275)},{0.7*sin(275)}) and ({0.7*cos(265)},{0.7*sin(265)}) 
.. ({0.75*cos(260)},{0.75*sin(260)}) {};

\draw[->,thick] ({0.4*cos(125)},{0.4*sin(125)}) .. controls 
({0.2*cos(180)},{0.2*sin(180)}) and ({0.2*cos(240)},{0.2*sin(240)}) 
.. ({0.4*cos(310)},{0.4*sin(310)}) {};

\draw[->,thick] ({0.3*cos(210)},{0.3*sin(210)}) .. controls 
({0.1*cos(240)},{0.1*sin(240)}) and ({0.1*cos(300)},{0.1*sin(300)}) 
.. ({0.3*cos(35)},{0.3*sin(35)}) {};


\tkzDefPoint({cos(pi/4)},{sin(pi/4)}){z6}
\tkzDefPoint({cos(pi/8)},{sin(pi/6)}){z7}
\tkzDefPoint({cos(pi/10)},{sin(pi/10)}){z8}

\tkzDrawCircle[orthogonal through=z1 and z8,thick,dashed](O,A)
\tkzDrawCircle[orthogonal through=z0 and z6](O,A)
\node[] at ({0.8*cos(35)},{0.8*sin(35)}) {$s_1$};
\tkzDrawCircle[orthogonal through=z0 and z7](O,A)
\node[] at ({0.925*cos(23)},{0.925*sin(23)}) {$s_2$};
\tkzDrawCircle[orthogonal through=z0 and z8](O,A)
\node[] at ({0.925*cos(11)},{0.925*sin(11)}) {$s_3$};

\end{scope}

\end{tikzpicture}
\begin{tikzpicture}[scale=0.42]

\clip[shift={(0,2)}](-1,-4.5) rectangle (16,8.5);

\draw[thin] (-1,0) -- (16,0);

\draw (0,0) -- (0,10);
\node[] at (-0.5,7) {$e_3$};
\draw (2,0) -- (2,10);
\node[] at (1.5,7) {$e_2$};
\draw (5,0) -- (5,10);
\node[] at (4.5,7) {$e_1$};
\node[] at (8.5,7) {$e_0$};

\draw[<->,dashed] (0,-1) -- (9,-1);
\node[] at (4.5,-2) [] {$|W|=|\re(e_3)-\re(e_0)|$};
\draw[thin,dotted] (0,-1.2) -- (0,0);
\draw[thin,dotted] (9,-1.2) -- (9,0);

\filldraw[fill=black!15!white,draw=black] 
(15,11) -- (15,0)
arc (0:180:0.5) 
arc  (0:180:1)
arc (0:180:0.5) 
arc (0:180:1) -- (9,11) -- (15,11);

\node[] at (12,9.5) {$\infty$};

\draw[thick,dashed] (0,0) arc (180:0:4.5);

\node[] at (12,4.5) {$A_k^{-1}B^{-1}\Omega_\HH$};

\end{tikzpicture}

\end{center}
\caption{Geometric length $|W|$ of a right cuspidal word 
$
W=(a_0,a_1,a_2,a_3)
$. 
The arrows inside $\Omega_\DD$ represent the action of $F_{a_0},F_{a_1},F_{a_2}$. The arcs $s_0:=s_{a_0}$, 
$
s_1:=F_{a_0}(s_{a_1})
$,
$
s_2:=F_{a_0,a_1}(s_{a_2})
$ 
and 
$
s_3:=F_{a_0,a_1,a_2}(s_{a_3})
$ 
share the common vertex $\xi_W$, which is sent to 
$\infty$ under the map $A_k^{-1}B^{-1}\varphi^{-1}$. Thus the arcs $s_0,s_1,s_2,s_3$ in $\DD$ are sent to parallel vertical arcs $e_i:=\varphi^{-1}(s_i)$ in $\HH$.}
\label{FigureGeometricLengthCuspidalWord}
\end{figure}

Fix $\cS=(A_1,\dots,A_p)$ as in Equation~\eqref{EquationRepresentativesOfCusps}. 
Let 
$
\Omega_\HH:=\varphi^{-1}(\Omega_\DD)\subset\HH
$ 
be the pre-image of $\Omega_\DD$ under the map in  
Equation~\eqref{EquationConjugationUpperHalfPlaneDisc}. 
Any vertex $\xi$ of $\Omega_\DD$ corresponds to an unique vertex $\zeta=\varphi^{-1}(\xi)$ of 
$\Omega_\HH$. For any such vertex $\zeta$ consider $B\in\Gamma$ and $k\in\{1,\dots,p\}$ with 
\begin{equation}
\label{EquationVerticesDomainInfinity}
\zeta=BA_k\cdot \infty
\end{equation}
Any side $s_a$ of $\Omega_\DD$ corresponds to an unique side $e_{a}:=\varphi^{-1}(s_a)$ of $\Omega_\HH$, where $a\in\cA$. 
If $BA_k\cdot \infty=B'A_j\cdot \infty$, then $j=k$. Moreover \footnote{In any Fuchsian group $\Gamma$ with cusps, if $G\in\Gamma$ satisfies $G\cdot\zeta=\zeta$ for some $\zeta\in\cP_\Gamma$, then 
$G$ is parabolic.} we have $B'=BP$, where $P\in\Gamma$ is parabolic fixing $A_k\cdot\infty$. Hence the map $z\mapsto A_k^{-1}PA_k(z)$ is an horizontal translation in $\HH$. 
If $s$ and $s'$ are geodesics in $\DD$ having $\xi$ as common endpoint, then their pre-images in $\HH$ under 
$\varphi\circ B\circ A_k$ are parallel vertical half lines whose distance does not depend on the choice of $B$ in Equation~\eqref{EquationVerticesDomainInfinity}. We have a well defined positive real number 
$$
\Delta(s,s',\xi):=
\left|
\re\big(A_k^{-1}B^{-1}\varphi^{-1}(s)\big)
-
\re\big(A_k^{-1}B^{-1}\varphi^{-1}(s')\big)
\right|.
$$

Fix a cuspidal word  $W=(a_0,\dots,a_{n})$ and the vertex $\xi_W$ of $\Omega_\DD$ associated to $W$ in \S~\ref{SectionCuspidalWords}. For $n\geq1$ 
Equation~\eqref{EquationCommonEndpointsCuspidalWords} implies that the geodesics 
$
s_{a_0},F_{a_0}(s_{a_1}),\dots,F_{a_0,\dots,a_{n-1}}(s_{a_n})
$ 
all have $\xi_W$ as common endpoint. See Figure~\ref{FigureGeometricLengthCuspidalWord}. Define the \emph{geometric length} $|W|\geq0$ of $W$ as
\begin{equation}
\label{EquationDefinitionGeometricLenght}
|W|:=
\left\{
\begin{array}{ll}
\Delta\big(s_{a_0},F_{a_0,\dots,a_{n-1}}(s_{a_n}),\xi_W\big)
&\text{ if }n\geq1
\\
0
&\text{ if }n=0.
\end{array}
\right.
\end{equation}

For $a\in\cA$ set 
$
G_a=\varphi^{-1}\circ F_a\circ\varphi
$. 
Set 
$
G_{a_0,\dots,a_n}:=G_{a_0}\circ\dots\circ G_{a_n}
$ 
for any word $(a_0,\dots,a_n)$ and 
$
G_{W_0,\dots,W_r}=G_{a_0,\dots,a_n}
$ 
if $(a_0,\dots,a_n)=W_0\ast\dots\ast W_r$. Define the interval
$$
[a_0,\dots,a_n]_\HH:=
\varphi^{-1}\big([a_0,\dots,a_n]_\DD\big)=
G_{a_0,\dots,a_n}\big(\partial\HH\setminus[\widehat{a_n}]_\HH\big).
$$

Set  
$
[a_0,a_1,\dots]_\HH:=\varphi^{-1}\big([a_0,a_1,\dots]_\DD\big)
$, 
that is encode $\alpha\in\RR$ by the same cutting sequence as $\varphi(\alpha)\in\DD$. 
If $(a_n)_{n\in\NN}$ has cuspidal decomposition $(W_r)_{r\in\NN}$, 
Equation~\eqref{EquationCuspidalCuttingSequence(UnitDisc)} becomes 
\begin{equation}
\label{EquationCuspidalCuttingSequence(UpperHalfPlane)}
\alpha=[W_0,W_1,\dots]_\HH:=[a_0,a_1,\dots]_\HH.
\end{equation}
For $r\in\NN$ let $W_r$ be the $r$-th cuspidal word. Set 
$
\zeta_{W_r}:=\varphi^{-1}(\xi_{W_r})
$. 
The convergents of $\alpha$ are  
\begin{equation}
\label{EquationConvergentBowenSeries}
\zeta_r:=G_{W_0,\dots,W_{r-1}}\cdot\zeta_{W_r}
\quad\text{ ; }\quad
r\in\NN.
\end{equation}

For $k=1,\dots,p$ let $\mu_k>0$ be such that the primitive parabolic element 
$P_k\in A_k \Gamma A_k^{-1}$ fixing $\infty$ acts by $P_k(z)=z+\mu_k$. Set 
$\mu:=\max\{\mu_1,\dots,\mu_p\}$.

\begin{theorem}
\label{TheoremGoodApproximationsAndBoundaryExpansion}
For any $r\in\NN$ with $|W_r|>0$ we have 
\begin{equation}
\label{EquationGoodApproximationsAndBoundaryExpansion(1)}
\frac{1}{|W_r|+2\mu}
\leq
D(G_{W_0,\dots,W_{r-1}}\cdot\zeta_{W_r})^2
\cdot
|\alpha-G_{W_0,\dots,W_{r-1}}\cdot \zeta_{W_r}|
\leq 
\frac{1}{|W_r|}.
\end{equation}
Moreover there exists $\epsilon_0>0$ depending only on $\Omega_\DD$ and on $\cS$, such that for any $G\in\Gamma$ and $k=1,\dots,p$ with $D(G\cdot z_k)\not=0$ the condition 
$$
D(G\cdot z_k)^2\cdot|\alpha-G\cdot z_k|<\epsilon_0
$$
implies that there exists some $r\in\NN$ such that  
\begin{equation}
\label{EquationGoodApproximationsAndBoundaryExpansion(2)}
G\cdot z_k=G_{W_0,\dots,W_{r-1}}\cdot\zeta_{W_r}
\quad
\textrm{ where }
\quad
|W_r|>0.
\end{equation}
\end{theorem}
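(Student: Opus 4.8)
The plan is to normalize each convergent to infinity and reduce both inequalities and the ``good approximations are convergents'' statement to elementary horoball geometry in $\HH$.

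\emph{Proof strategy for the two-sided estimate \eqref{EquationGoodApproximationsAndBoundaryExpansion(1)}.} Fix $r$ with $|W_r|>0$ and write $W_r=(a_0,\dots,a_n)$ with $n\ge 1$. Since $\zeta_{W_r}=\varphi^{-1}(\xi_{W_r})$ is a vertex of $\Omega_\HH$, there are $B\in\Gamma$ and $k$ with $\zeta_{W_r}=BA_k\cdot\infty$, and the point $G_{W_0,\dots,W_{r-1}}\cdot\zeta_{W_r}$ equals $(G_{W_0,\dots,W_{r-1}}BA_k)\cdot\infty$. Put $H:=G_{W_0,\dots,W_{r-1}}BA_k$, so that by \eqref{EquationDefinitionDenominator} the denominator of the convergent is $|c(H)|$. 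Applying $H^{-1}$ (equivalently $A_k^{-1}B^{-1}$ after the first $r$ cuspidal blocks) sends $\zeta_r$ to $\infty$ and, by the construction of the geometric length in Figure~\ref{FigureGeometricLengthCuspidalWord}, sends the geodesics $s_{a_0},F_{a_0}(s_{a_1}),\dots,F_{a_0,\dots,a_{n-1}}(s_{a_n})$ to parallel vertical half-lines $e_0,\dots,e_n$ with $|\re(e_0)-\re(e_n)|=|W_r|$. The point $\alpha$ lies in the arc $[a_0,\dots,a_n]_\HH$, hence its image $H^{-1}(\alpha)$ lies beyond $e_n$, and on the other side the fact that $W_r$ is the $r$-th \emph{maximal} cuspidal block forces the \emph{next} symbol to backtrack the cuspidal pattern, so $H^{-1}(\alpha)$ does not escape past $e_0$ by more than the width of one fundamental translation domain at that cusp, i.e. by at most $\mu_k\le\mu$; likewise one may lose at most one further $\mu$ on the $e_n$ side. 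This pins $H^{-1}(\alpha)$ in a horizontal window of length between $|W_r|$ and $|W_r|+2\mu$ just below $\Omega_\HH$, and a direct computation using \eqref{EquationDiameterHoroball} (the Ford-circle identity $|\alpha-H\cdot\infty|\cdot c(H)^2 = 1/|H^{-1}(\alpha)-H^{-1}(\infty)|$, with $H^{-1}(\infty)=-d(H)/c(H)$ at the foot of the relevant vertical lines) converts these two bounds into \eqref{EquationGoodApproximationsAndBoundaryExpansion(1)}.

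\emph{Proof strategy for the converse \eqref{EquationGoodApproximationsAndBoundaryExpansion(2)}.} Suppose $G\cdot z_k$ satisfies $D(G\cdot z_k)^2|\alpha-G\cdot z_k|<\epsilon_0$. Write $G\cdot z_k=(GA_k)\cdot\infty$ and set $L:=GA_k$; the hypothesis says $L^{-1}(\alpha)$ lies in a horoball of Euclidean ``height'' $>1/\epsilon_0$ above the real axis, centered at $L^{-1}(\infty)$. Equivalently, the horoball $G(B_k)$ around $G\cdot z_k$ is very large compared to $|\alpha-G\cdot z_k|$, so $\alpha$ sits deep inside it. The key is a compactness/finiteness input: the polygon $\Omega_\DD$ has finitely many vertices and sides, so there is a uniform lower bound $\ell_0>0$ on $\Delta(s,s',\xi)$ over all pairs of \emph{distinct} sides of $\Omega_\HH$ sharing a vertex, and a uniform upper bound on how far two horoballs $G(B_k)$, $G'(B_j)$ sitting at tangent cusp points can interpenetrate. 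Choosing $\epsilon_0$ smaller than a suitable function of $\ell_0$, $\mu$ and these packing constants forces $\alpha$ to be inside $G(B_k)$ but inside \emph{no competing} horoball at a neighbouring cusp point; equivalently, in the Bowen--Series coding the initial block of $\alpha$ up to the symbol where it leaves $G(B_k)$ is forced to be exactly the cuspidal word $W$ with $\zeta_W=\varphi(G\cdot z_k)$, and the prefix of the coding of $\alpha$ up to that point equals $G$ followed by the reading of that block. Matching this against the cuspidal decomposition $(W_r)_{r\in\NN}$ of $\alpha$ gives an $r$ with $G\cdot z_k=G_{W_0,\dots,W_{r-1}}\cdot\zeta_{W_r}$ and $|W_r|>0$ (the case $|W_r|=0$, i.e.\ a one-letter block, is excluded because then the horoball is no larger than the one-letter fundamental horoball and violates the $\epsilon_0$-deepness once $\epsilon_0$ is small).

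\emph{Main obstacle.} The routine part is the half-plane bookkeeping for \eqref{EquationGoodApproximationsAndBoundaryExpansion(1)}; the genuinely delicate step is the converse, specifically making the constant $\epsilon_0$ \emph{effective and independent of $\alpha$}. One must rule out, uniformly, that $\alpha$ is $\epsilon_0$-deep in a horoball $G(B_k)$ \emph{without} that horoball being one of the convergent horoballs $G_{W_0,\dots,W_{r-1}}(B_{k_r})$ — i.e.\ control the ``shadows'' of all horoballs on $\partial\HH$ and show that the convergent horoballs are precisely those whose shadow contains $\alpha$ with multiplicity governed by $|W_r|$. This is where the finiteness of $\Omega_\DD$, the no-backtracking condition \eqref{EqNobacktrack}, and the maximality in the cuspidal acceleration must be combined carefully; once that separation is quantified, the rest follows by the same Ford-circle identity used above.
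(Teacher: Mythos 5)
Your argument for the two--sided estimate \eqref{EquationGoodApproximationsAndBoundaryExpansion(1)} is essentially the paper's: normalize $\zeta_{W_r}$ to $\infty$ by $H^{-1}=(G_{W_0,\dots,W_{r-1}}BA_k)^{-1}$, identify $D(\zeta_r)^2\,|\alpha-\zeta_r|$ with the reciprocal of the horizontal gap $|H^{-1}\cdot\alpha-H^{-1}\cdot\infty|$ (the paper writes this as $1/(2T)$ using the tangent horoball and Equation~\eqref{EquationDiameterHoroball}), and trap that gap between $|W_r|$ and $|W_r|+2\mu$. As a sketch this is acceptable and faithful to the paper.

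The converse \eqref{EquationGoodApproximationsAndBoundaryExpansion(2)} is where the theorem's content lies, and there your proposal has a genuine gap --- one you yourself label the ``main obstacle'' without closing it. The quantitative separation you defer to ``packing constants'' is exactly what the paper supplies through the reduced form $G\cdot z_k=G_{b_0,\dots,b_m}\cdot\zeta_0$ together with Lemma~\ref{LemmaDistancePoleReducedForm} (the vertex $\zeta_0$ stays a uniform distance $\kappa_1$ from the pole of $G_{b_0,\dots,b_m}$) and Lemma~\ref{LemmaInequalityDenominatorsReducedForm} (the denominators $D(G_{b_0,\dots,b_m}\cdot\zeta_0)$ and $D(G_{b_0,\dots,b_m}\cdot\zeta_i)$ for other vertices $\zeta_i$ are comparable up to a uniform $\kappa_2$). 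Combined with the horoball separation \eqref{EquationDistanceDisjointHoroballs}, these show that if the coding of $\alpha$ diverges from $(b_0,\dots,b_m)$ too early, or if $G\cdot z_k$ fails to be an endpoint of $[a_0,\dots,a_{m+1}]_\HH$ and of $[a_0,\dots,a_{m+2}]_\HH$, then $D(G\cdot z_k)^2|\alpha-G\cdot z_k|\geq \kappa_2/S_0$, so that $\epsilon_0<\kappa_2/S_0$ suffices. The point your sketch misses is that the hypothesis is normalized by the denominator of $G\cdot z_k$ itself, while the competing endpoints carry \emph{a priori} unrelated denominators; without the denominator-comparison lemma the ``deepness'' hypothesis cannot be transferred to the neighbouring cusp points, and no choice of $\epsilon_0$ independent of $\alpha$ falls out of finiteness of $\Omega_\DD$ alone.

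Two further steps of the paper are absent from your outline. First, knowing that $G\cdot z_k$ is a common endpoint of $[a_0,\dots,a_{m+1}]_\HH$ and $[a_0,\dots,a_{m+2}]_\HH$ does not immediately identify it with a convergent: the cuspidal acceleration may cut at $(a_0,\dots,a_{m+1})$ rather than at $(a_0,\dots,a_m)$ (see Remark~\ref{RemarkConcatenationCuspidalWords}), and the paper's Step (3) needs a separate argument --- including one more application of the separation estimate with an extended version of Part (2) of Lemma~\ref{LemmaInequalityDenominatorsReducedForm} --- to exclude $|W_r|=0$ in that case and to rewrite $G_{b_0,\dots,b_m}\cdot\zeta_0$ as $G_{W_0,\dots,W_{r-1}}\cdot\zeta_{W_r}$. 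Your sentence ``matching this against the cuspidal decomposition gives an $r$'' passes over precisely this case analysis. Second, the degenerate case where $(b_0,\dots,b_m)$ is empty, i.e.\ $G\cdot z_k$ is itself a vertex of $\Omega_\HH$, requires its own definition of $\epsilon_0$ as a positive infimum over the excluded configurations (the paper's Step (0)); it is not covered by the horoball picture you describe.
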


\begin{remark}
\label{RemarkCovarianceIndependence}
Equation~\eqref{EquationGoodApproximationsAndBoundaryExpansion(1)} shows that geometric length and denominators satisfy a form of covariance under the choice of the set $\cS$ in Equation~\eqref{EquationRepresentativesOfCusps}. 
Equation~\eqref{EquationGoodApproximationsAndBoundaryExpansion(2)} shows that good approximations $\zeta_r$ don't depend on the choice of the subgroup $\Gamma_0$. 
\end{remark}

\subsection{Reduced form of parabolic fixed points}
\label{SectionParabolicFixedPointsAndFiniteIndexFreeSubgroup}

Fix $G\cdot z_k\in\cP_\Gamma$. Recall Equation~\eqref{EquationSameParabolicFixedPoints} and write 
elements of $\Gamma_0$ in the generators $\{G_a:a\in\cA\}$. There exists an unique admissible word $b_0,\dots,b_m$ and a vertex $\zeta$ of 
$\Omega_\HH$ which is not an endpoint of $e_{\widehat{b_m}}$ such that 
$$
G\cdot z_k=G_{b_0,\dots,b_m}\cdot\zeta.
$$
The representation above is called the \emph{reduced form} of the parabolic fixed point $G\cdot z_k$. 
In the next Lemmas~\ref{LemmaDistancePoleReducedForm} and~\ref{LemmaInequalityDenominatorsReducedForm}, let $(b_0,\dots,b_m)$ be a non-trivial admissible word and let $\zeta_0$ be a vertex of $\Omega_\HH$ which is not an endpoint of $e_{\widehat{b_m}}$, so that 
$
G_{b_0,\dots,b_m}\cdot\zeta_0
$ 
is a parabolic fixed point written in its reduced form and different from $\infty$.

\begin{lemma}
\label{LemmaDistancePoleReducedForm}
There exists a constant $\kappa_1>0$, depending only on $\Omega_\HH$, such that 
$$
\left|
\zeta_0-G_{b_0,\dots,b_m}^{-1}\cdot\infty
\right|
\geq \kappa_1,
$$
that is the vertex $\zeta_0$ and the pole of $G_{b_0,\dots,b_m}$ stay at distance uniformly bounded from below.
\end{lemma}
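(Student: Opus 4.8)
The plan is to conjugate by $\varphi$ and argue in the disc, where the pole is easy to track. Under $\varphi$ the point $\infty$ becomes $\varphi(\infty)=1$, the vertex $\zeta_0$ becomes a vertex $\xi_0:=\varphi(\zeta_0)$ of $\Omega_\DD$ which, by hypothesis, is \emph{not} an endpoint of the side $s_{\widehat{b_m}}$ (equivalently not an endpoint of the arc $[\widehat{b_m}]_\DD$), and, using $F_{\widehat a}=F_a^{-1}$,
$$
\varphi\big(G_{b_0,\dots,b_m}^{-1}\cdot\infty\big)
=F_{b_0,\dots,b_m}^{-1}(1)
=F_{\widehat{b_m}}\circ F_{\widehat{b_{m-1}}}\circ\cdots\circ F_{\widehat{b_0}}(1)=:q .
$$
If $\zeta_0=\infty$ or $G_{b_0,\dots,b_m}^{-1}\cdot\infty=\infty$ the inequality is trivial, so assume both are finite. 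From $\varphi^{-1}(\eta)=i(1+\eta)/(1-\eta)$ one gets $\varphi^{-1}(\xi_0)-\varphi^{-1}(q)=2i(\xi_0-q)/\big((1-\xi_0)(1-q)\big)$, so, since $|1-\eta|\le 2$ on $\partial\DD$,
$\big|\zeta_0-G_{b_0,\dots,b_m}^{-1}\cdot\infty\big|\ge \tfrac12\,|\xi_0-q|$. Hence it suffices to bound $|\xi_0-q|$ below by a constant depending only on $\Omega_\DD$.

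Next I would locate $q$ among the $2d$ arcs $[a]_\DD$, $a\in\cA$. Applying Equation~\eqref{EquationDefinitionCylinderBowenSeries} to the admissible word $(\widehat{b_m},\dots,\widehat{b_0})$, one has $1\in[b_0,\dots,b_m]_\DD$ if and only if $q\in\partial\DD\setminus[\widehat{b_m}]_\DD$; moreover $F_{b_0,\dots,b_m}$ carries the two endpoints of $[\widehat{b_m}]_\DD$ onto the two endpoints of $[b_0,\dots,b_m]_\DD$. Consequently: if $1\notin\interior [b_0,\dots,b_m]_\DD$ then $q\in\overline{[\widehat{b_m}]_\DD}$; while if $1\in\interior [b_0,\dots,b_m]_\DD$ then $(b_0,\dots,b_m)$ is an initial factor of the boundary expansion $(c_n)_{n\in\NN}$ of $1$ and $q=\cB\cS^{m+1}(1)=[c_{m+1},c_{m+2},\dots]_\DD\in\overline{[c_{m+1}]_\DD}$. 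In either case $q$ lies in the closure of a single arc $[a_\ast]_\DD$ with $a_\ast\in\{\widehat{b_m},\,c_{m+1}\}$.

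Finally I would conclude by a finiteness argument. The $2d$ vertices of $\Omega_\DD$ are exactly the endpoints of the arcs $[a]_\DD$, and the interior of each $[a]_\DD$ contains no vertex; hence every vertex which is not an endpoint of $s_a$ lies at distance at least
$$
\delta:=\min\big\{\,\distance(v,\overline{[a]_\DD})\ :\ v\text{ a vertex of }\Omega_\DD,\ a\in\cA,\ v\text{ not an endpoint of }s_a\,\big\}>0 ,
$$
and $\delta$ depends only on $\Omega_\DD$ (equivalently on $\Omega_\HH$). If $a_\ast=\widehat{b_m}$ we are done, because $\xi_0$ is a vertex and is not an endpoint of $s_{\widehat{b_m}}$, so $|\xi_0-q|\ge\distance(\xi_0,\overline{[\widehat{b_m}]_\DD})\ge\delta$. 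If $a_\ast=c_{m+1}$ the same estimate applies once one checks that $\xi_0$ is not an endpoint of $[c_{m+1}]_\DD$: here the reduced-form hypothesis enters, since the endpoints $\xi^L_{c_{m+1}},\xi^R_{c_{m+1}}$ are tied to $\xi^L_{\widehat{c_m}},\xi^R_{\widehat{c_m}}$ through the cyclic order $o$ and the relation $c_{m+1}\ne\widehat{c_m}$ (\S\S\ref{SectionTheBoundaryMap}--\ref{SectionCuspidalWords}), and the hypothesis that $\zeta_0$ is not an endpoint of $e_{\widehat{b_m}}=e_{\widehat{c_m}}$ rules out exactly the coincidences that could place $\xi_0$ at an endpoint of $[c_{m+1}]_\DD$. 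Taking $\kappa_1:=\delta/2$ finishes the proof. The main obstacle is precisely this second case, where $\infty$ (equivalently $\varphi(\infty)=1$) sits inside the cylinder $[b_0,\dots,b_m]_\DD$ so that the pole cannot be read off from $(b_0,\dots,b_m)$ alone; handling it cleanly uses that for the chosen list $\cS$ the point $\infty$ belongs to $\cP_\Gamma$, so the expansion of $\varphi(\infty)$ is eventually cuspidal and only finitely many configurations need to be inspected.
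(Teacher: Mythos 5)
Your Case~1 (the pole $q=F_{b_0,\dots,b_m}^{-1}(1)$ lies in $\overline{[\widehat{b_m}]_\DD}$ because $1\notin\interior[b_0,\dots,b_m]_\DD$, then conclude by finiteness of the vertex/arc configurations) is exactly the paper's proof, merely transported to the disc: the paper runs the same two sentences directly in $\HH$, asserting that $\infty$ is not interior to $[b_0,\dots,b_m]_\HH$, deducing that the pole lies in the closure of $[\widehat{b_m}]_\HH$, and concluding because $\zeta_0$ is a vertex distinct from the endpoints of $e_{\widehat{b_m}}$. What you should note is that in the paper's setting your Case~2 never occurs: $\infty$ is a vertex of $\Omega_\HH$ (equivalently $1=\varphi(\infty)$ is a vertex of $\Omega_\DD$); this is implicit throughout --- see the ``two parallel vertical segments of $\Omega_\HH$'' invoked in the proof of Lemma~\ref{LemmaInequalityDenominatorsReducedForm} and Figure~\ref{FigureProofLemmaConvergentsAndGeometricLength} --- and since the $2d$ vertices are precisely the endpoints of the $2d$ arcs $[a]_\DD$, a vertex is never interior to $[b_0]_\DD$, hence never interior to $[b_0,\dots,b_m]_\DD$. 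So the case you call ``the main obstacle'' is vacuous.

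The genuine gap is that your treatment of Case~2, taken on its own terms, does not work. If $1\in\interior[b_0,\dots,b_m]_\DD$ you correctly get $q=\cB\cS^{m+1}(1)\in[c_{m+1}]_\DD$, but the claim that the reduced-form hypothesis ``rules out exactly the coincidences that could place $\xi_0$ at an endpoint of $[c_{m+1}]_\DD$'' is unsupported: the hypothesis says only that $\xi_0$ is not an endpoint of $s_{\widehat{b_m}}$, whereas $c_{m+1}$ is an arbitrary letter different from $\widehat{b_m}$, so nothing prevents $\xi_0=\xi^L_{c_{m+1}}$ or $\xi_0=\xi^R_{c_{m+1}}$; and once $\xi_0$ is an endpoint of $[c_{m+1}]_\DD$, the membership $q\in\overline{[c_{m+1}]_\DD}$ gives no lower bound at all on $|\xi_0-q|$, since as $m$ grows $q$ could a priori accumulate on that endpoint. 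Your closing remark --- that $\infty\in\cP_\Gamma$ forces the $\cB\cS$-orbit of $1$ to be finite, so only finitely many values of $q$ occur --- points toward a different argument that could be completed (the orbit of a point of $\cP_{\Gamma_0}$ eventually lands in the finite, $\cB\cS$-invariant set of vertices, and $q\neq\xi_0$ because $G\cdot z_k\neq\infty$), but as written it is a one-line gesture that replaces, rather than repairs, the cyclic-order argument. Either adopt the paper's standing normalization ($1$ a vertex of $\Omega_\DD$, so Case~2 is empty and Case~1 is the whole proof) or develop the finite-orbit argument in full; as it stands the proof is incomplete.
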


\begin{proof}
We have 
$
G_{b_0,\dots,b_m}\big(\RR\setminus[\widehat{b_m}]_\HH\big)=[b_0,\dots,b_m]_\HH
$ 
By Equation~\eqref{EquationDefinitionCylinderBowenSeries}. 
Since $\infty$ does not belong to the interior of $[b_0,\dots,b_m]_\HH$ then the pole of $G_{b_0,\dots,b_m}$ belongs to the closure of 
$[\widehat{b_m}]_\HH$. The Lemma follows because $\zeta_0$ is a vertex of $\Omega_\HH$ different from the endpoints of $e_{\widehat{b_m}}$.
\end{proof}

\begin{lemma}
\label{LemmaInequalityDenominatorsReducedForm}
There exists a constant $\kappa_2>0$, depending only on $\Omega_\HH$ and on $\cS$, such that the following holds. 
\begin{enumerate}
\item
If $\zeta_1$ is a vertex of $\Omega_\HH$ different from $\zeta_0$, then  
$$
D(G_{b_0,\dots,b_m}\cdot\zeta_0)
\geq 
\kappa_2\cdot D(G_{b_0,\dots,b_m}\cdot\zeta_1).
$$
\item
If $b_{m+1}$ satisfies $b_{m+1}\not=\widehat{b_m}$ and $\zeta_2$ is a vertex of $\Omega_\HH$ with 
$
G_{b_{m+1}}\cdot\zeta_2\not=\zeta_0
$,  
then   
$$
D(G_{b_0,\dots,b_m}\cdot\zeta_0)
\geq 
\kappa_2\cdot D(G_{b_0,\dots,b_m,b_{m+1}}\cdot\zeta_2).
$$
\end{enumerate}
\end{lemma}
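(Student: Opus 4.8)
The plan is to reduce both inequalities to a single \emph{cocycle identity} for the denominator $D$, combined with the lower bound of Lemma~\ref{LemmaDistancePoleReducedForm} and the fact that $\Omega_\HH$ has only finitely many vertices. First I would record the identity. For each vertex $\zeta\neq\infty$ of $\Omega_\HH$, fix $B\in\Gamma$ and $k\in\{1,\dots,p\}$ with $\zeta=BA_k\cdot\infty$; then $\zeta=a(BA_k)/c(BA_k)$ and $D(\zeta)=|c(BA_k)|>0$ by Equation~\eqref{EquationDefinitionDenominator}. For $M:=G_{b_0,\dots,b_m}$ we have $M\cdot\zeta=(MB)A_k\cdot\infty$ and
\[
c(MBA_k)=c(M)\,a(BA_k)+d(M)\,c(BA_k)=c(BA_k)\bigl(c(M)\zeta+d(M)\bigr),
\]
so Equation~\eqref{EquationDefinitionDenominator} again gives, also when $M\cdot\zeta=\infty$ (both sides being then $0$),
\[
D(M\cdot\zeta)=D(\zeta)\cdot|c(M)\zeta+d(M)|=D(\zeta)\cdot|c(M)|\cdot|\zeta-p_M|,
\]
where $p_M:=M^{-1}\cdot\infty$ is the pole of $M$ and the last form is valid when $c(M)\neq0$. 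Since $\Omega_\HH$ has finitely many vertices, there are constants $0<d_{\min}\le d_{\max}$ and $V>0$, depending only on $\Omega_\HH$ and $\cS$, with $D(\zeta)\in[d_{\min},d_{\max}]$ and $|\zeta|\le V$ for every finite vertex $\zeta$.

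For part~(1) I would argue as follows. Put $M:=G_{b_0,\dots,b_m}$; if $D(M\cdot\zeta_1)=0$ the inequality is trivial, so assume $M\cdot\zeta_1\neq\infty$ (and $M\cdot\zeta_0\neq\infty$ is part of the hypothesis). If $c(M)=0$, the identity above gives $D(M\cdot\zeta_0)/D(M\cdot\zeta_1)=D(\zeta_0)/D(\zeta_1)\ge d_{\min}/d_{\max}$ and we are done. If $c(M)\neq0$ and $\zeta_0,\zeta_1$ are both finite, the identity gives
\[
\frac{D(M\cdot\zeta_0)}{D(M\cdot\zeta_1)}=\frac{D(\zeta_0)}{D(\zeta_1)}\cdot\frac{|\zeta_0-p_M|}{|\zeta_1-p_M|}\ \ge\ \frac{d_{\min}}{d_{\max}}\cdot\frac{|\zeta_0-p_M|}{|\zeta_1-p_M|}.
\]
Now Lemma~\ref{LemmaDistancePoleReducedForm} applies to $(b_0,\dots,b_m)$ and $\zeta_0$ and yields $|\zeta_0-p_M|\ge\kappa_1$, and since $|\zeta_1-p_M|\le|\zeta_0-p_M|+|\zeta_0-\zeta_1|\le|\zeta_0-p_M|+2V$, a split according to whether $|\zeta_0-p_M|\ge2V$ or $<2V$ bounds $|\zeta_0-p_M|/|\zeta_1-p_M|$ below by $\min\{1/2,\,\kappa_1/(4V)\}$. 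This proves part~(1) with an explicit $\kappa_2$ depending only on $\Omega_\HH$ and $\cS$; the (finitely many) cases where $\zeta_0$ or $\zeta_1$ equals $\infty$ are handled by the same computation, using Equation~\eqref{EquationDiameterHoroball} for that vertex, and present no difficulty.

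For part~(2) I would set $M:=G_{b_0,\dots,b_m}$ and $\zeta_2':=G_{b_{m+1}}\cdot\zeta_2$, so that $G_{b_0,\dots,b_m,b_{m+1}}\cdot\zeta_2=M\cdot\zeta_2'$, and then run the argument of part~(1) with $\zeta_1$ replaced by $\zeta_2'$. The key observation is that $\zeta_2'\in\cP_\Gamma$ takes only finitely many values as $b_{m+1}$ ranges over $\cA$ and $\zeta_2$ over the vertices of $\Omega_\HH$, so $D(\zeta_2')$ and $|\zeta_2'|$ satisfy bounds of the same type as those for $D(\zeta)$ and $|\zeta|$ above, with constants depending only on $\Omega_\HH$ and $\cS$. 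Since $M$ and $\zeta_0$ are unchanged, Lemma~\ref{LemmaDistancePoleReducedForm} still gives $|\zeta_0-p_M|\ge\kappa_1$, and the two-case estimate goes through verbatim. One then takes $\kappa_2$ to be the minimum of the two constants obtained and of $1$ (so that the degenerate case $\zeta_1=\zeta_0$, resp.\ $\zeta_2'=\zeta_0$, is also covered).

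The only genuine obstacle is that $p_M$ need not lie in a bounded part of $\RR$: from the proof of Lemma~\ref{LemmaDistancePoleReducedForm} one only knows $p_M\in\overline{[\widehat{b_m}]_\HH}$, and this arc is unbounded precisely when it is the one whose interior contains $\infty$. The two-case split is exactly what copes with this: when $p_M$ is far out, $\zeta_0$ and $\zeta_1$ (resp.\ $\zeta_2'$) are comparably far from it and the ratio $|\zeta_0-p_M|/|\zeta_1-p_M|$ stays near $1$, while when $p_M$ is close in the bound $|\zeta_0-p_M|\ge\kappa_1$ of Lemma~\ref{LemmaDistancePoleReducedForm} takes over. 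Everything else is the cocycle identity together with the counting of vertices, and the special cases in which some vertex equals $\infty$ are a routine exercise with Equation~\eqref{EquationDiameterHoroball}.
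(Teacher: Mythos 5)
Your proof is correct and follows essentially the same route as the paper's: both compute the ratio of denominators via the $c$-entry of a product of matrices, factor out the pole $p_M=M^{-1}\cdot\infty$, invoke Lemma~\ref{LemmaDistancePoleReducedForm} for the lower bound $|\zeta_0-p_M|\ge\kappa_1$, and use finiteness of the vertex set (and, for part (2), of $\{G_a:a\in\cA\}$); your explicit two-case split is exactly the paper's parenthetical remark that the ratio tends to $1$ as the numerator grows. The only step you gloss over is the case $\zeta_0=\infty$, where Lemma~\ref{LemmaDistancePoleReducedForm} gives nothing and one must instead observe, as the paper does explicitly, that the reduced-form hypothesis forces $[\widehat{b_m}]_\HH$ to be a bounded interval, hence $|p_M|$ is uniformly bounded --- an observation your closing paragraph essentially contains.
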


\begin{proof}
We prove Part (1). Set $G:=G_{b_0,\dots,b_m}$, 
$
\zeta:=G\cdot\zeta_0
$, 
and 
$
\zeta':=G\cdot\zeta_1
$. 
If $\zeta'=\infty$ then the statement is trivially true. If $D(G\cdot\zeta_1)\not=0$, let 
$\zeta_0=B_0A_k\cdot\infty$ and $\zeta_1=B_1A_j\cdot\infty$ as in 
Equation~\eqref{EquationVerticesDomainInfinity}. Referring to  
Equation~\eqref{EquationCoefficientsSL(2,C)}, let $c,d$ be the entries of $G$. Let $a_0,c_0$ and $a_1,c_1$ be the entries of $B_0A_k$ and $B_1A_j$ respectively. We prove an upper bound for
$$
\frac{D(G_{b_0,\dots,b_m}\cdot\zeta_1)}{D(G_{b_0,\dots,b_m}\cdot\zeta_0)}
=
\left|
\frac{ca_1+dc_1}{ca_0+dc_0}
\right|.
$$
We cannot have $c_0=c_1=0$, because $\zeta_0\not=\zeta_1$ and in particular $\zeta_0$, $\zeta_1$ cannot be both equal to $\infty$. Moreover $G\cdot\zeta_0$, $G\cdot\zeta_1$ are both different from $\infty$, thus condition $c=0$ implies $c_0,c_1\not=0$. Hence for $c=0$ Part (1) follows because the ratio above equals $|c_1/c_0|$, which varies in a finite set of values and is therefore bounded from above. If $c,c_0,c_1\not=0$ then 
$$
\left|
\frac{ca_1+dc_1}{ca_0+dc_0}
\right|
=
\left|
\frac{c_1}{c_0}
\right|
\cdot
\left|
\frac{(a_1/c_1)-(-d/c)}{(a_0/c_0)-(-d/c)}
\right|
=
\left|
\frac{c_1}{c_0}
\right|
\cdot
\left|
\frac{\zeta_1-(G^{-1}\cdot\infty)}{\zeta_0-(G^{-1}\cdot\infty)}
\right|.
$$
In this case Part (1) follows because $|c_1/c_0|$ is bounded from above, and 
Lemma~\ref{LemmaDistancePoleReducedForm} gives a lower bound for the denominator of the second factor (the numerator is not bounded, but as it increases the ratio converges to $1$). If $c,c_0\not=0$ and $c_1=0$ then Lemma~\ref{LemmaDistancePoleReducedForm} gives
$$
\left|
\frac{ca_1+dc_1}{ca_0+dc_0}
\right|
=
\left|
\frac{a_1}{c_0}
\right|
\cdot
\left|
\frac{1}{(a_0/c_0)-(-d/c)}
\right|
=
\left|
\frac{a_1}{c_0}
\right|
\cdot
\left|
\frac{1}{\zeta_0-(G^{-1}\cdot\infty)}
\right|
\leq
\left|
\frac{a_1}{c_0\cdot\kappa_1}
\right|,
$$
and Part (1) follows observing that $a_1/c_0$ varies in a finite set of values. Finally, if $c,c_1\not=0$ and $c_0=0$ then
$$
\left|
\frac{ca_1+dc_1}{ca_0+dc_0}
\right|
=
\left|
\frac{a_1}{a_0}-(-d/c)\frac{c_1}{a_0}
\right|
\leq
\left|\frac{a_1}{a_0}\right|+|G^{-1}\cdot\infty|\left|\frac{c_1}{a_0}\right|.
$$
In this case $\zeta_0=\infty$, which is not an endpoint of $[\widehat{b_m}]$. Thus $[\widehat{b_m}]$ is contained in the compact interval of $\RR$ delimited by the two parallel vertical segments of $\Omega_\HH$. Hence $|G^{-1}\cdot\infty|$ is uniformly bounded, because the pole $G^{-1}\cdot\infty$ belongs to the closure of $[\widehat{b_m}]$ (see proof of Lemma~\ref{LemmaDistancePoleReducedForm}). Part (1) follows in this case too, and the proof is complete. Part (2) follows similarly, replacing 
$\zeta_1$ by 
$
\zeta_\ast:=G_{b_{m+1}}\cdot\zeta_2
$ 
and observing that, since $G_{b_{m+1}}$ varies in the finite set 
$\{G_a:a\in\cA\}$ then also the entries of $X\in\sltwor$ with
$
G_{b_{m+1}}\cdot\zeta_2=X\cdot \infty
$ 
vary in a finite set. Moreover $\zeta_0\not=\zeta_\ast$, and thus 
$
G\cdot\zeta_0\not=G\cdot\zeta_\ast
$. 
\end{proof}

\subsection{Proof of Theorem~\ref{TheoremGoodApproximationsAndBoundaryExpansion}}

According to a standard separation property of parabolic fixed points, there exists a constant $S_0>0$, depending only on $\Gamma$ and on $\cS$, such that for any $G\cdot z_i$ and $F\cdot z_j$ in $\cP_\Gamma$ with 
$
G\cdot z_i\not=F\cdot z_j
$ 
we have 
\begin{equation}
\label{EquationDistanceDisjointHoroballs}
|G\cdot z_i-F\cdot z_j|
\geq
\frac{1}{S_0}\cdot\frac{1}{D(G\cdot z_i)D(F\cdot z_j)}.
\end{equation}
Se Appendix \S~A in~\cite{MarcheseDimension} for a proof. 
Let 
$
\alpha=[a_0,a_1,\dots]_\HH=[W_0,W_1,\dots]_\HH
$ 
be the expansion of $\alpha\in\RR$ as in 
Equation~\eqref{EquationCuspidalCuttingSequence(UpperHalfPlane)}.

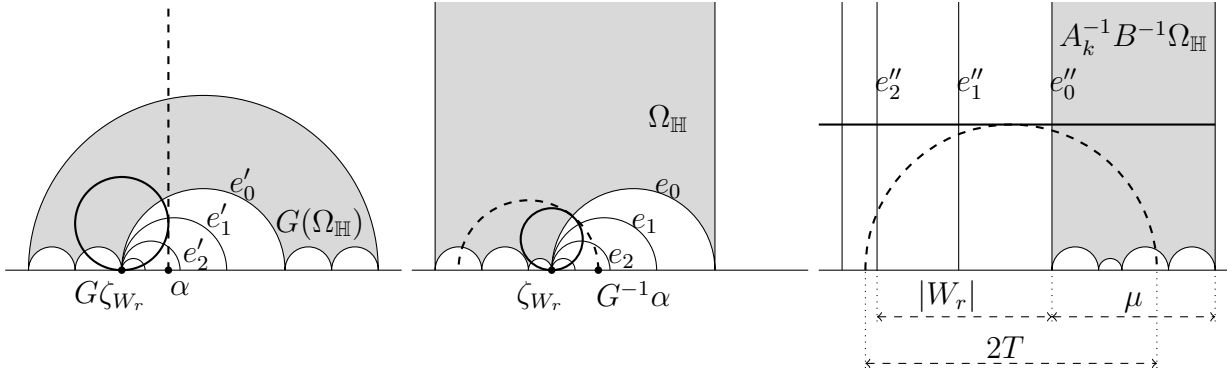
\begin{figure}
\begin{center}
\begin{tikzpicture}[scale=0.31]

\clip(-1,-5) rectangle (16,11.5);

\draw[thin] (-1,0) -- (16,0);

\filldraw[fill=black!15!white, draw=black] 
(15,0) arc (0:180:1) 
arc  (0:180:1)
arc (0:180:3.5) 
arc (0:180:1) 
arc (0:180:1) 
arc (180:0:7.5);

\node[] at (12.5,2) {$G(\Omega_\HH)$};

\draw  (4,0) arc (180:0:0.5);
\node[] at (7.2,0.7) {$e'_2$};
\draw  (4,0) arc (180:0:1.25);
\node[] at (8.2,2.2) {$e'_1$};
\draw  (4,0) arc (180:0:2.25);
\node[] at (9.2,3.7) {$e'_0$};

\draw[thick,dashed]  (6,0) -- (6,12);
\node[circle,fill,inner sep=1pt] at (6,0) {};
\node[] at (6.5,0) [below] {$\alpha$};

\node[circle,fill,inner sep=1pt] at (4,0) {};
\node[] at (3.5,0) [below] {$G\zeta_{W_r}$};

\draw[thick] (4,2) circle (2);

\end{tikzpicture}
\begin{tikzpicture}[scale=0.31]

\clip(-1,-5) rectangle (16,11.5);

\draw[thin] (-1,0) -- (16,0);


\filldraw[fill=black!15!white, draw=black] 
(12,0) arc (0:180:3.5) 
arc  (0:180:0.5)
arc (0:180:1) 
arc (0:180:1) -- (0,16.5) -- (12,16.5) -- (12,0);
\node[] at (10,6.5) {$\Omega_\HH$};

\draw  (5,0) arc (180:0:0.5);
\node[] at (8,0.5) {$e_2$};
\draw  (5,0) arc (180:0:1.25);
\node[] at (9,2) {$e_1$};
\draw  (5,0) arc (180:0:2.25);
\node[] at (10,3.5) {$e_0$};

\draw[thick,dashed]  (7,0) arc (0:180:3);
\node[circle,fill,inner sep=1pt] at (7,0) {};
\node[] at (8.5,0) [below] {$G^{-1}\alpha$};

\node[circle,fill,inner sep=1pt] at (5,0) {};
\node[] at (4.5,0) [below] {$\zeta_{W_r}$};

\draw[thick] (5,1.33) circle (1.33);

\end{tikzpicture}
\begin{tikzpicture}[scale=0.31]

\clip(-1,-5) rectangle (16.5,11.5);

\draw[thin] (-1,0) -- (17,0);

\draw[<->,dashed] (1.5,-2) -- (9,-2);
\node[] at (4.5,-2.25) [above] {$|W_r|$};
\draw[thin,dotted] (1.5,-2.2) -- (1.5,0);
\draw[thin,dotted] (9,-2.2) -- (9,0);

\draw[<->,dashed] (9,-2) -- (16,-2);
\node[] at (12.5,-2.25) [above] {$\mu$};
\draw[thin,dotted] (16,-2.2) -- (16,0);

\filldraw[fill=black!15!white,draw=black] 
(16,12) -- (16,0)
arc (0:180:1) 
arc  (0:180:1)
arc (0:180:0.5) 
arc (0:180:1) -- (9,12) -- (16,12);


\draw (0,0) -- (0,12);
\node[] at (2,8) {$e''_2$};
\draw (1.5,0) -- (1.5,12);
\node[] at (5.5,8) {$e''_1$};
\draw (5,0) -- (5,12);
\node[] at (9.5,8) {$e''_0$};

\draw[thick,dashed] (1,0) arc (180:0:6.25);

\draw[thick] (-1,6.25) -- (16,6.25);

\draw[<->,dashed] (1,-4) -- (13.5,-4);
\node[] at (7,-4.25) [above] {$2T$};
\draw[thin,dotted] (1,-4.2) -- (1,0);
\draw[thin,dotted] (13.5,-4.2) -- (13.5,0);

\node[] at (12.5,10) {$A_k^{-1}B^{-1}\Omega_\HH$};

\end{tikzpicture}
\end{center}
\caption{The $r$-th cuspidal word $W_r=(a_0,a_1,a_2)$ of $\alpha$ is the first cuspidal word of $G^{-1}\cdot\alpha$, where $G=G_{W_0,\dots,W_{r-1}}$. The vertex 
$\zeta_{W_r}$ of $\Omega_\HH$ is common to the arcs $e_0=e_{a_0}$, $e_1:=G_{a_0}e_{a_1}$ and $e_2:=G_{a_0a_1}e_{a_2}$. The arcs $e'_i=Ge_i$ share the vertex $G\zeta_{W_r}$. The point $\zeta_{W_r}$ is sent to 
$\infty$, and the arcs $e_0,e_1,e_2$ are sent to the parallel vertical arcs $e''_0,e''_1,e''_2$. 
We have $|W_r|=\big|\re(e''_2)-\re(e''_0)\big|$.}
\label{FigureProofLemmaConvergentsAndGeometricLength}
\end{figure}

\subsubsection{First part}

Fix $r\in\NN$ with $|W_r|>0$. Take $k\in\{1,\dots,p\}$ and $B\in\Gamma$ as in 
Equation~\eqref{EquationVerticesDomainInfinity}, that is $\zeta_{W_r}=BA_k\cdot \infty$. 
As in Figure~\ref{FigureProofLemmaConvergentsAndGeometricLength}, let $T>0$ be such that the horoball
$$
B_T:=G_{W_0,\dots,W_{r-1}}BA_k\big(\{z\in\HH:\im(z)>T\}\big)
$$
is tangent at $G_{W_0,\dots,W_{r-1}}\cdot \zeta_{W_r}$ with radius 
$
\rho(B_T)=|\alpha-G_{W_0,\dots,W_{r-1}}\cdot \zeta_{W_r}|
$. 
Equation~\eqref{EquationDiameterHoroball} and 
Equation~\eqref{EquationDefinitionDenominator} give
$$
D(G_{W_0,\dots,W_{r-1}}\cdot\zeta_{W_r})^2
\cdot
|\alpha-G_{W_0,\dots,W_{r-1}}\cdot\zeta_{W_r}|
=
c^2(G_{W_0,\dots,W_{r-1}}BA_k)\cdot\frac{\diameter(B_T)}{2}
=
\frac{1}{2T}.
$$
The geodesic in $\HH$ with endpoints 
$(G_{W_0,\dots,W_{r-1}}BA_k)^{-1}\cdot \infty$ and 
$(G_{W_0,\dots,W_{r-1}}BA_k)^{-1}\cdot \alpha$ is tangent to $\{z\in\HH:\im(z)>T\}$. 
Finally Equation~\eqref{EquationDefinitionGeometricLenght} gives
$$
|W_r|\leq 2T\leq |W_r|+2\mu.
$$

\subsubsection{Second part}

Referring to \S~\ref{SectionParabolicFixedPointsAndFiniteIndexFreeSubgroup}, let $\zeta_0$ be the vertex of $\Omega_\HH$ and $(b_0,\dots,b_m)$ be the admissible word such that the reduced form of the parabolic fixed point $G\cdot z_k$ is
$$
G\cdot z_k=G_{b_0,\dots,b_m}\cdot\zeta_0,
$$
where $\zeta_0$ is not an endpoint of $e_{\widehat{b_m}}$ whenever $(b_0,\dots,b_m)$ is not the empty word. Assume  
$
D(G\cdot z_k)^2|\alpha-G\cdot z_k|<\epsilon_0
$, 
where the constant $\epsilon_0>0$ will be determined later.

\smallskip

\emph{Step $(0)$} Assume that $(b_0,\dots,b_m)$ is the empty word, so that  
$
\zeta_0=G\cdot z_k\not=\infty
$. 
Consider the extra assumption $|W_0|>0$ and $\zeta_0=\zeta_{W_0}$ on pairs $(\alpha,\zeta_0)$, where 
$
\zeta_{W_0}=\varphi^{-1}(\xi_{W_0})
$ 
and $\xi_{W_0}$ is the vertex of $\Omega_\DD$ associated to $W_0$ as in \S~\ref{SectionCuspidalWords}. 
Define $\epsilon_0>0$ by 
$$
\epsilon_0:=\inf_{(\alpha,\zeta_0)}D(\zeta_0)^2\cdot|\alpha-\zeta_0|,
$$
where the infimum is taken over all pairs $(\alpha,\zeta_0)$ not satisfying the extra assumption. With such $\epsilon_0$, the statement follows whenever $(b_0,\dots,b_m)$ is the empty word.

\smallskip

\emph{Step $(1)$} Now assume that $(b_0,\dots,b_m)$ is not the empty word. Then $G\cdot z_k$ is an interior point of  
$
[b_0,\dots,b_m]_\HH
$. 
Let $\zeta_1,\zeta_2$ be the endpoints of $[\widehat{b_m}]$, which are vertices of $\Omega_\HH$ different from $\zeta_0$. The endpoints of 
$
[b_0,\dots,b_m]_\HH
$ 
are 
$
\zeta'_i:=G_{b_0,\dots,b_m}\cdot\zeta_i
$ 
for $i=1,2$, according to Equation~\eqref{EquationDefinitionCylinderBowenSeries}. 
Let $N\geq-1$ be maximal with $a_n=b_n$ for any $n=0,\dots,N$, where the last condition is empty for $N=-1$, and where 
$N\leq m$. Observe that condition $N\leq m-1$ implies  
$
\alpha\not\in [b_0,\dots,b_m]_\HH
$, 
and therefore 
\begin{align*}
|\alpha-G\cdot z_k|
&
\geq
\min_{i=1,2}
|\zeta'_i-G\cdot z_k|
=
\min_{i=1,2}
|G_{b_0,\dots,b_m}\cdot\zeta_i-G_{b_0,\dots,b_m}\cdot\zeta_0|
\\
&
\geq
\frac{S_0^{-1}}{D(G_{b_0,\dots,b_m}\cdot\zeta_0)}\cdot
\min_{i=1,2}
\frac{1}{D(G_{b_0,\dots,b_m}\cdot\zeta_i)}
\geq
\frac{S_0^{-1}\kappa_2}{D(G_{b_0,\dots,b_m}\cdot\zeta_0)^2},
\end{align*}
where the third inequality follows from 
Part (1) of Lemma~\ref{LemmaInequalityDenominatorsReducedForm} and the second from 
Equation~\eqref{EquationDistanceDisjointHoroballs}. Therefore $N=m$, provided that 
$$
\epsilon_0<\kappa_2/S_0.
$$
We proved 
$
[a_0,\dots,a_m]_\HH=[b_0,\dots,b_m]_\HH
$. 
Moreover $G\cdot z_k$ does not belong to the interior of $[a_0,\dots,a_m,a_{m+1}]_\HH$, since the latter is a subinterval of $[b_0,\dots,b_m]_\HH$ delimited by the image under $G_{b_0,\dots,b_m}$ of two consecutive vertices of $\Omega_\HH$. The same argument as in the first part of Step (1), which is left to the reader, shows that $G\cdot z_k$ is an endpoint of $[a_0,\dots,a_m,a_{m+1}]_\HH$. 

\smallskip

\emph{Step $(2)$} We show that 
$
G\cdot z_k=G_{b_0,\dots,b_m}\cdot \zeta_0
$ 
is an endpoint of $[a_0,\dots,a_{m+2}]_\HH$. Otherwise $G\cdot z_k$ doesn't belong to the closure of 
$[a_0,\dots,a_{m+2}]_\HH$. Since   
$
\alpha\in[a_0,\dots,a_{m+2}]_\HH
$  
then 
\begin{align*}
|\alpha-G\cdot z_k|
&
\geq
|G_{b_0,\dots,b_m,a_{m+1}}\cdot\zeta_3-G_{b_0,\dots,b_m}\cdot \zeta_0|
\\
&
\geq
\frac
{S_0^{-1}}
{D(G_{b_0,\dots,b_m}\cdot\zeta_0)D(G_{b_0,\dots,b_m,a_{m+1}}\cdot\zeta_3)}
\geq
\frac
{S_0^{-1}\kappa_2}
{D(G_{b_0,\dots,b_m}\cdot\zeta_0)^2},
\end{align*}
where 
$
G_{b_0,\dots,b_m,a_{m+1}}\cdot\zeta_3
$ 
is the endpoint of $[a_0,\dots,a_{m+2}]_\HH$ which is closest to $G\cdot z_k$ and where $\zeta_3$ is a vertex of $\Omega_\HH$ which is not an endpoint of $e_{\widehat{a_{m+1}}}$. We use Equation~\eqref{EquationDistanceDisjointHoroballs} and Part (2) of Lemma~\ref{LemmaInequalityDenominatorsReducedForm}. The inequality is absurd by condition 
$
\epsilon_0<\kappa_2/S_0
$. 

\smallskip

\emph{Step (3)} Let $r$ be minimal such that $(a_0,\dots,a_m)$ is an initial factor of $W_0\ast\dots\ast W_{r-1}$. If $(a_0,\dots,a_{m+2})$ is also an initial factor of $W_0\ast\dots\ast W_{r-1}$, then
$
G_{W_0,\dots,W_{r-1}}\cdot \xi_{W_{r-1}}
$ 
is a common endpoint of the intervals $[a_0,\dots,a_m]_\HH$, $[a_0,\dots,a_{m+1}]_\HH$ and 
$[a_0,\dots,a_{m+2}]_\HH$, according to Equation~\eqref{EquationCommonEndpointsCuspidalWords}. 
Without loss of generality we have 
$$
G_{W_0,\dots,W_{r-1}}\cdot \xi_{W_{r-1}}
=
\inf[a_0,\dots,a_m]_\HH
=
\inf[a_0,\dots,a_{m+1}]_\HH
=
\inf[a_0,\dots,a_{m+2}]_\HH.
$$
The common endpoint is not $G\cdot z_k$, which belongs to the interior of 
$[a_0,\dots,a_m]_\HH$. Thus Step $(1)$ implies   
$
G\cdot z_k=\sup[a_0,\dots,a_{m+1}]_\HH
$, 
which is absurd because $G\cdot z_k$ is an endpoint of $[a_0,\dots,a_{m+2}]_\HH$ by Step $(2)$. Hence 
$
W_0\ast\dots\ast W_{r-1}
$ 
is either equal to $(a_0,\dots,a_m)$ or to $(a_0,\dots,a_{m+1})$. Moreover $(a_{m+1},a_{m+2})$ is a cuspidal word, because $[a_0,\dots,a_{m+1}]_\HH$ and $[a_0,\dots,a_{m+2}]_\HH$ share the endpoint 
$G\cdot z_k$.

\smallskip

$\bullet$ In case  
$
W_0\ast\dots\ast W_{r-1}=(a_0,\dots,a_m)
$ 
the word $(a_{m+1},a_{m+2})$ is an initial factor of $W_r$, that is $|W_r|>0$ and $\zeta_0=\zeta_{W_r}$. 

\smallskip

$\bullet$ In case    
$
W_0\ast\dots\ast W_{r-1}=(a_0,\dots,a_{m+1})
$ 
the word $W':=(a_{m+1})\ast W_r$ is also cuspidal (this is allowed by 
Remark~\ref{RemarkConcatenationCuspidalWords}). 
If $|W_r|=0$, that is $W_r=(a_{m+2})$, then $G\cdot z_k$ does not belong to the closure of 
$[a_0,\dots,a_{m+3}]_\HH$ and we get an absurd by \footnote{We reason as in Step $(2)$, modulo replacing the constant $\kappa_2$ by a smaller one, and extending Part (2) of 
Lemma~\ref{LemmaInequalityDenominatorsReducedForm} one more step, in order to compare 
$D(G_{b_0,\dots,b_m}\cdot\zeta_0)$ and 
$D(G_{b_0,\dots,b_m,a_{m+1},a_{m+2}}\cdot\zeta_3)$.}
$$
|\alpha-G\cdot z_k|
\geq
|G_{b_0,\dots,b_m}\cdot \zeta_0-G_{b_0,\dots,b_m,a_{m+1},a_{m+2}}\cdot\zeta_3|
\geq
\frac
{S_0^{-1}\kappa_2}
{D(G_{b_0,\dots,b_m}\cdot\zeta_0)^2},
$$
where $\zeta_3$ is a vertex of $\Omega_\HH$ and 
$
G_{b_0,\dots,b_m,a_{m+1},a_{m+2}}\cdot\zeta_3
$ 
is the endpoint of $[a_0,\dots,a_{m+3}]_\HH$ which is closest to $G\cdot z_k$. 
Since $W'$ is cuspidal with $|W'|>0$ we have 
$\zeta_0=\zeta_{W'}$. But we have also 
$
\zeta_{W'}=G_{a_{m+1}}\cdot\zeta_{W_r}
$, 
which implies 
$$
G_{b_0,\dots,b_m}\cdot\zeta_0=
G_{a_0,\dots,a_m}\cdot G_{a_{m+1}}\cdot\zeta_{W_r}=
G_{W_0,\dots,W_{r-1}}\cdot\zeta_{W_r}.
$$
In both cases Theorem~\ref{TheoremGoodApproximationsAndBoundaryExpansion} is proved. $\qed$

\appendix

\end{document}